\newcommand{\Ecal}{\mathcal{E}}
\newcommand{\Ecalhat}{\hat {\Ecal}}
\newcommand{\what}{\hat{w}}
\newcommand{\whatlambdan}{\hat{w}_{\lambda}}
\newcommand{\thetahatlambdan}{\hat{\theta}_{\lambda}}
\newcommand{\wstarlambdan}{w^\ast_{\lambda}}
\newcommand{\wstarlambdant}{w^{\ast T}_{\lambda}}
\newcommand{\bfm}[1]{\ensuremath{\mathbf{#1}}}
     \def\bQ_\lambda{\bfm Q_\lambda}     \def\cQ_\lambda{{\cal  Q_\lambda}}
          \def\cX{{\cal  X}}
\renewcommand{\hat}{\widehat}
\def\efn{E_{F_n}}
\def\thetahat{\hat \theta}
\def\uml{u^{ML}}
\newtheorem{theorem}{Theorem}[section]
\newtheorem{corollary}[theorem]{Corollary}
\newtheorem{lemma}[theorem]{Lemma}
\def\BState{\State\hskip-\ALG@thistlm}
\title{Fast construction of efficient composite likelihood equations}
\author{Zhendong Huang}
\author{Davide Ferrari \thanks{Corresponding author: Davide Ferrari, School of Mathematics and Statistics, The University of
		Melbourne, Parkville, VIC 3010, Australia. E-mail: \url{dferrari@unimelb.edu.au}.}}
\affil{School of Mathematics and Statistics, The University of Melbourne}
\date{}
\begin{document}

\maketitle

\begin{abstract}
Growth in both size and complexity of modern data challenges
the applicability of traditional likelihood-based inference. Composite likelihood (CL) methods address the  
difficulties related to model selection and computational intractability of the full likelihood by combining a number of low-dimensional likelihood objects into a single objective function used for inference.  This paper introduces a procedure to combine partial likelihood objects from a large set of feasible candidates and simultaneously carry out parameter estimation. The new method constructs estimating equations balancing statistical efficiency and computing cost by minimizing an approximate distance from the full likelihood score subject to a $\ell_1$-norm penalty representing the available computing resources. This results in truncated CL equations containing only the  most informative partial likelihood score terms. An asymptotic theory within a framework where both sample size and data dimension grow is developed and finite-sample properties are illustrated through numerical examples.
\end{abstract}

Keywords: Composite likelihood estimation, likelihood truncation, $\ell_1$-penalty.

\section{Introduction} \label{sec:intro}

Since the idea of likelihood was fully developed by \cite{fisher1922mathematical}, likelihood-based inference has played a role of paramount importance in statistics. The complexity of  modern data, however, poses nontrivial challenges to traditional likelihood  methods. One issue  is related to  model selection, since the full likelihood function can be difficult or impossible to specify in complex multivariate problems. Another difficulty concerns computing and
the necessity to obtain inferences quickly. These challenges have motivated the development of composite likelihood (CL) methods, which avoid 
intractable full likelihoods by compounding a set
of low-dimensional likelihood objects. \cite{Besag74} pioneered CL inference in the context of spatial data; \cite{lindsay1988composite} developed CL inference in its generality. Due
to its flexible framework and established theory, the CL framework has become a popular tool
in many areas of applied statistics; see \cite{varin2011overview} for an
overview of CL inference and common applications.

Consider $n$ independent observations on the $d\times 1$ random vector
$X=(X_1, \dots, X_d)^T$  with pdf in the parametric family $\{f(x;\theta)$, $x\in \cX$, $\theta
\in \Theta \subseteq \mathbb{R}^p\}$, where  $\theta^\ast \in \Theta$ denotes the true parameter. In this paper, we are mainly concerned with large data sets where both the data dimension $d$ and the sample size $n$ are large. Given i.i.d. observations $X^{(1)}, \dots, X^{(n)}$ on $X$, we write $E_{F_n}(g) = n^{-1}\sum_{i\le n}g(X^{(i)})$ for the empirical mean of the function $g$, where $F_n(x)= n^{-1}\sum_{i\le n}I(X^{(i)}\le x)$ is the empirical cdf, and  use $E(g)$ to denote its expected value. The operator
``$\nabla$'' denotes  differentiation with respect to
$\theta$. In the CL setting, the maximum likelihood score $u^{ML}(\cdot, \theta) = \nabla \log f(\cdot,
\theta)$ and the associated estimating equations $E_{F_n} \uml(\theta) =0$ are intractable  due to difficulties in computing or  specifying the full $d$-dimensional
density $f(\cdot; \theta)$.  Suppose, however, that one can obtain $m$ tractable  pdfs $f_1(s_1;\theta), \dots, f_m(s_m; \theta)$ for
sub-vectors $S_1,\dots, S_m$ of $X$, where each $S_j$ has dimension much
smaller than $d$. For example, 
$S_1$ could represent a single element of $X$ like $X_1$, a variable pair
like $(X_1 , X_2)$, or a conditional sub-vector like $(X_1 , X_2)|X_1$. Typically, the total number of sub-models $m$ 
grows quickly with  $d$; for instance, taking all variable pairs in
$X$ results in $m=d(d-1)/2$ candidate sub-likelihoods. The specific choice for the set of pdfs $\{f_j, j=1,\dots, m\}$ is sometimes referred to as CL design \citep{lindsay2011issues} and is typically specified by the practitioner . For simplicity, here the CL design is treated as given, and we assume $f_1=\dots = f_m$, as it is often the case in applications.

We focus on the maximum composite likelihood estimator  (MCLE),  $\thetahat(w)$, defined as the solution to the CL estimating equations
\begin{equation} \label{estimating_eq}
0 = E_{F_n}[u(\theta, w)]  = \efn [w_1  u_1(\theta)  + \dots + w_m u_m(\theta)],
\end{equation}
where $u_j(\cdot,\theta) = \nabla \log \{ f_j(\cdot; \theta) \}$ is the $j$th partial score (sub-likelihood score) associated with the $j$th subset $S_j$ of $X$. Here $w \in \mathbb{R}^{m}$ is a given vector 
of coefficients to be determined, which we refer to as composition rule. In addition to well-known computational advantages  compared to MLE and flexible modeling, the MCLE  enjoys first-order
properties analogous to those of the maximum likelihood estimator (MLE). Since the partial scores commonly define unbiased estimating equations (i.e. $Eu_j(\theta) =0$ at $\theta=\theta^\ast$, for all $1\leq j \leq m$), the CL score $u(\theta, w)$ in (\ref{estimating_eq}) is also unbiased, a property  leading to consistency of  $\thetahat(w)$.  
Unfortunately, the MCLE does not have the same second-order properties as the MLE since the asymptotic variance of 
$\thetahat(w)$ is generally different from the inverse of 
Fisher information $ - E [\nabla  u^{ML}(\theta^\ast)]$, with the two coinciding only in special families of models.

The choice of the composition rule $w$ is crucial in determining both  efficiency and computing cost associated with $\thetahat(w)$. Established theory of unbiased estimating equations prescribes to find $w$ so to minimize the asymptotic variance of $\thetahat(w)$ \citep[Chapter 2]{heyde2008quasi}, given by the inverse of the $p\times p$ Godambe information matrix
\begin{equation}\label{eq:Godambe}
\mathcal{G}(\theta, w) = E\{\nabla u(\theta, w)\} \ \text{var}\left\{ u(\theta, w) \right\}^{-1} E\{\nabla u(\theta, w)\}.
\end{equation}
Although theoretically appealing,  this is a notoriously difficult  task due the well-known instability of common  estimators of the term  $\text{var}\left\{ u(\theta, w) \right\}$  in $\mathcal{G}(\theta, w)$  \citep{lindsay2011issues}. On the other hand, the common practice of retaining all
terms in (\ref{estimating_eq}) by choosing fixed $w_j \neq 0$ for all $j \geq 1$ (e.g. $w_j=1$,  $j\geq 1$)
is undesirable from both computational and statistical efficiency perspectives, especially when the partial scores $u_j$ exhibit pronounced correlation.   \cite{cox2004note}  discuss the detrimental effect caused by the presence of many correlated scores on the variance of $\thetahat(w)$ when $n$ is small compared to $m$ in  pair-wise likelihood estimation. In the most serious case where the correlation between scores is overwhelming, keeping all the terms in (\ref{estimating_eq}) may lead to lack of consistency for the implied  MCLE $\thetahat(w)$.

Motivated by the above considerations, we introduce a new method called sparse composite likelihood  estimation and selection (SCLE)  consisting of two main steps: a truncation Step (T-Step) and an estimation  Step (E-Step). 
In the T-Step, the composition rule $w$ is obtained by minimizing an approximate distance between the unknown full likelihood score 
$u^{ML}(\theta)$ and the CL score $u(\theta, w)$,  subject to a $\ell_1$-norm constraint on $w$. This step may be viewed as maximizing statistical accuracy for given afforded computing. Alternatively it may be interpreted as minimizing the computing cost for given level of statistical efficiency. Due to the geometry of the $\ell_1$-norm, the resulting composition rule, say $\hat w$, contains a number of non-zero elements (see Lemma \ref{Lem:KKT}). While the most useful terms for improving MCLE's statistical accuracy are retained, the noisy sub-likelihoods contributing little or no improvement are dropped. In the E-step, we solve the estimating equations (\ref{estimating_eq}) with $w=\hat w$ and find the final estimator $\hat \theta (\hat w)$.  Compared to traditional CL estimation, the main advantage of our approach is to reduce the computational burden, while retaining relatively high efficiency in large data sets. The reduced number of terms in the estimating equations (\ref{estimating_eq}) translates into fast computing and enhanced stability for the final estimator at a relatively small cost in terms of statistical efficiency.

The remainder of this paper is organized as follows. In Section \ref{sec:method}, we describe the main methodology for simultaneous likelihood truncation and  parameter estimation. In Section \ref{sec:properties}, we study the properties of the truncated composition rule and for the implied estimator within a framework where both the sample size $n$ and the data dimension $d$ are allowed to diverge. Section \ref{sec:example} illustrates the properties of our methodology in the context of estimation of location and scale for multivariate normal models. In Section \ref{sec:NumExample}, we study the trade-off between computational and statistical efficiency in finite samples through numerical simulations. Section \ref{sec:finalremarks} concludes with final remarks. Technical lemmas used in our main results are deferred to the appendix.

\section{Main methodology} \label{sec:method}

Throughout the paper, we consider unbiased  partial scores $\{u_j(\theta), 1 \le j \le m\}$ satisfying
\begin{equation} \label{eq:sub_equations}
Eu_j(\theta)=0,  \ \text{for all } \ 1 \le j \le m,
\end{equation}
when $\theta = \theta^\ast$ and assume that $\theta^\ast$ is the unique solution for all the  equations in  (\ref{eq:sub_equations}). The approach described in this section is applicable to problems with arbitrary sample size $n$ and data dimension $d$, but we are mainly concerned with the situation where the data dimension $d$ (and number of available sub-likelihood objects $m$) is large compared to the sample size $n$. Although we focus on log-likelihood partial scores for concreteness,  our methodology and the properties in Section \ref{sec:properties} remain essentially unchanged if $u_j(\theta)$ is any arbitrary unbiased M-estimating equation. For instance, when $\theta$ is a location parameter, a more appropriate choice in the presence of outliers may be the Huber-type partial score  $u_j(\theta)= \psi(s_j - \theta)$, where $\psi(z)=-k$ if $z \leq k$,  $\psi(z)=z$ if $|z| \leq k$ and $\psi(z)=k$ if $z\geq k$, with $k>0$. Another suitable choice in the same setting is the Lq-likelihood estimating equation of \cite{ferrari2010maximum} defined by $u_j(\theta) = \nabla \log_q \{f_j(s_j; \theta)\}$, where $\log_q(z)= \log(u)$ if $q=1$, and $\log_q(z)= (z^{1-q}-1)/(1-q)$ if $q\neq 1$.

In the rest of the paper we use $U(\theta)$ to denote the $p \times m$ matrix with column vectors  $u_1(\theta), \dots , u_m(\theta)$ and define the $m \times m$ matrix $S(\theta)= U(\theta)^T U(\theta)$ with  $(jk)$th entry $\{S(\theta)\}_{j,k}=u_j(\theta)^T  u_{k}(\theta)$.  We write $U_{\mathcal{A}}(\theta)$ for the sub-matrix of $U(\theta)$  with columns corresponding to $\mathcal A \subseteq \{1,\dots, m\}$, while $U_{\setminus \mathcal{A}}(\theta)$ denotes the sub-matrix containing the remaining columns. Accordingly, we define the $|\mathcal{A}|\times |\mathcal{A}|$ matrix $S_{\mathcal{A}}(\theta)= U(\theta)_{\mathcal{A}}^T U_{\mathcal{A}}(\theta)$ and use $w_{\mathcal{A}}$ to denote the sub-vector of $w$ with elements $\{w_j$, $j \in \mathcal{A} \}$, while $w_{\setminus \mathcal{A}}$ represents the vector containing all the elements in $w$ not in $w_{\mathcal{A}}$.

\subsection{Sparse and efficient estimating equations} \label{Sec:sparse&efficient}

Our main objective is to solve the CL estimating equations $0=E_{F_n}u(\theta, w)$ defined in (\ref{estimating_eq}) with respect to $\theta$ using coefficients $w=w_\lambda(\theta)$  obtained by minimizing the ideal  criterion
\begin{align} \label{eq:criterion}
Q_\lambda(\theta, w) = \dfrac{1}{2}E \left\|   \uml  (\theta)-  \sum_{j=1}^m w_j u_j(\theta) 
\right\|^2_2   + \lambda  \sum_{j=1}^m \alpha_j \left\vert   w_j \right\vert,
\end{align}
where $\Vert \cdot \Vert_2$ denotes the Euclidean norm, $\lambda 
\geq 0$ is a given constant, and  the $\alpha_j$s are pre-set constants not depending on the data. For clarity of exposition, we  set
$\alpha_j=1$ for all $j \geq 1$ in the remainder of the paper.  The optimal solution $w_\lambda(\theta)$ is interpreted as one that maximizes  the
statistical accuracy of the implied CL estimator, subject to a given level of  computing.  Alternatively, $w_\lambda(\theta)$ may be viewed as to minimize the complexity of the CL equations, subject to given efficiency 
compared to MLE. The tuning constant $\lambda$ balances the trade-off between  statistical efficiency and computational burden

The  first term  in $Q_\lambda(\theta,w)$ aims to obtain efficient estimating equations by
finding a CL score close to the ML  score.  When $\lambda = 0$ and $\theta = \theta^\ast$, the composition rule $w^\ast_0= w_0(\theta^\ast)$ is optimal in the sense that the score function $u(\theta, w^\ast_0)$ is closest to the MLE score $u^{ML}(\theta)$. Although this choice gives estimators with good statistical efficiency, it offers no control for the CL score complexity since all the partial likelihood scores are included in the final estimating equation. The second term $\lambda  \sum_{j=1}^m \alpha_j \left\vert   w_j
\right\vert$ in (\ref{eq:criterion}) is a penalty discouraging overly complex estimating equations. In Section \ref{sec: theory1}, we show that typically this form of penalty implies a number of  elements in  $w_{\lambda}(\theta)$ exactly zero for any $\lambda>0$.  For relatively large $\lambda$, many elements in $w_\lambda(\theta)$ are exactly zero, thus simplifying considerably the CL estimating equations $0=E_{F_n}u(\theta, w_\lambda(\theta))$. When a very large fraction of such elements is zero,  we say that $w_{\lambda}(\theta)$ and the 
CL equations $0=E_{F_n}u(\theta, w_\lambda(\theta))$ are sparse. Sparsity is a key advantage of our approach to reduce the computational burden when achievable without loosing much statistical efficiency. On the other hand, if $\lambda$ is too large, one risks to miss the information in some useful data subsets  which may otherwise improve statistical accuracy.

\subsection{Empirical criterion and one-step estimation}
\label{sec:twostep}

Obvious difficulties related to direct minimization of the ideal criterion  $Q_\lambda(\theta,w)$ are the presence of the intractable 
likelihood score $\uml$ and the expectation depending on the unknown parameter $\theta^\ast$. To address these issues, first note that, up to a negligible term not depending on $w$, Criterion (\ref{eq:criterion}) can be written as 
\begin{equation}\label{eq:crit2}
\dfrac{1}{2} E \left\| \sum_{j=1}^m w_j 
u_j(\theta) \right\|^2_2  
-  \sum_{j=1}^m w_j E \left[
u^{ML}(\theta)^T u_j(\theta)  \right]+ \lambda  \sum_{j=1}^m \alpha_j
\left\vert   w_j \right\vert.
\end{equation}
If $\theta=\theta^\ast$, we have $E[\uml(\theta) 
u_j(\theta )^T] = E[u_j(\theta )u_j(\theta )^T]$. To see this, recall that partial scores are unbiased and   differentiate both sides of $0 = Eu_j(\theta)$ under appropriate regularity conditions.  This result is used to  eliminate the explicit dependency on the score $\uml$. Finally, replacing expectations in (\ref{eq:crit2}) by empirical averages leads to the following empirical objective:
\begin{align} \label{eq:objective_empirical}
\hat Q_\lambda(\theta, w) = \dfrac{1}{2}\efn \left\| \sum_{j=1}^m w_j 
u_j(\theta) \right\|^2_2 -  \sum_{j=1}^m w_j   \efn\left[
u_j(\theta)^T u_j(\theta)  \right]+ \lambda  \sum_{j=1}^m \alpha_j
\left\vert   w_j \right\vert.  
\end{align}

Under appropriate regularity conditions, the empirical criterion (\ref{eq:objective_empirical}) estimates consistently the population criterion (\ref{eq:criterion}) up to an irrelevant constant not depending on $w$, with the caveat that $\theta$ must be close to  $\theta^\ast$. These considerations motivate the following estimation strategy:
\begin{itemize}
	\item[1)] {\it T-Step.} Given a preliminary root-$n$ consistent estimator $\thetahat$,  compute the truncated composition rule $\what_\lambda$ by solving 
	\begin{equation} \label{eq:LAstep}
	\hat w_\lambda = \underset {w \in \mathbb{R}^m} {\text{argmin}}   \ \hat Q_\lambda (\thetahat, w).
	\end{equation}
	\item[2)] {\it E-Step.} Update the parameter estimator by the one-step Newton-Raphson iteration
	\begin{equation} \label{eq:Estep}
	\thetahat_\lambda =\thetahat -\left
	[\efn\nabla u(\thetahat ,\what_\lambda)\right ]^{-1}
	\efn u(\thetahat ,\what_\lambda).
	\end{equation}
\end{itemize}
Theorem \ref{Thm:uniqueroots} shows that the convex minimization problem in the T-Step has unique solution. Particularly,  let ${\Ecalhat} \subseteq \{1,\dots, m\}$ is the subset of partial scores such that
\begin{equation}\label{eq:cov_with_mle}
\left|  \efn \left\{ u_j(\thetahat)^T r_j(\thetahat, \what_\lambda) \right\} \right| \geq \lambda,
\end{equation}
where $r_j$ is the pseudo-residual defined by $r_j(\theta, w) =  u_j(\theta)- u(\theta, w)$ and 
and write $\setminus {\Ecalhat}$ for the set $\{1,\dots m\} \setminus {\Ecalhat}$. Then the solution  of the T-Step  is
\begin{equation} \label{eq:whatsol_emp}
\hat w_{\lambda,  \Ecalhat}  =  \left\{ \efn  S_{\Ecalhat}(\thetahat)   \right\}^{-1} \left\{ \text{diag}\{\efn S_{ \Ecalhat} (\thetahat)\} - \lambda  \ \text{sign}(\hat w_{\lambda,  \Ecalhat } ) \right\}, \ \ \hat w_{\lambda, \setminus  \Ecalhat } = 0, 
\end{equation}
where: $S_{\Ecalhat}= U^T_{\Ecalhat} U_{\Ecalhat}$ and $U_{\Ecalhat}$ is a matrix with column vectors $\{ u_j, j \in \Ecalhat \}$; $\text{sign}(w)$ is the vector sign function with $j$th element taking values $-1$, $0$ and $1$ if $w_j<0$, $w_j=0$ and $w_j>0$, respectively; and $\text{diag}(A)$ denotes the diagonal of the square matrix $A$.

More insight on the meaning of (\ref{eq:cov_with_mle})  may be useful.  Differentiating (\ref{eq:crit2}) in $w_j \neq 0$  and then expanding around $\theta^\ast$ under Conditions C.1 and C.2  in Section \ref{sec: theory1} gives
\begin{equation}\label{eq:cov_with_mle2}
\efn \left\{ u_j(\thetahat)^T r_j(\thetahat , w) \right\}   =  E\left\{ u_j(\theta^\ast)^T \left[ u^{ML}(\theta^\ast) - u(\theta^\ast, w) \right] \right\} + o_p(1).
\end{equation}
Combining (\ref{eq:cov_with_mle}) and (\ref{eq:cov_with_mle2}) highlights that the $j$th partial likelihood score $u_j(\theta)$ is selected when it is sufficiently correlated with the residual difference $u^{ML}(\theta) - u(\theta, w)$. Hence, our criterion retains only those $u_j$s which are  maximally useful to explain the gap between the full likelihood score $u^{ML}(\theta)$ and the CL score $u(\theta, w)$, while it drops the remaining scores.

When $\lambda= 0$, we have
${\Ecalhat} = \{1,\dots, m\}$ meaning that the corresponding composition rule $\what_0$ does not contain zero elements. From (\ref{eq:whatsol_emp}) for $\lambda=0$ it is required that the empirical covariance matrix for all partial scores $\efn  S(\thetahat)$  is non-singular  which is violated when $n<m$. Even for $n>m$, however, $\efn  S(\thetahat)$ may be nearly singular due to the presence of largely correlated  partial scores. On the other hand,  setting $\lambda > 0$ always gives a non-singular matrix $\efn S_{\Ecalhat}(\thetahat)$ and guarantees existence of $\hat w_{\lambda,  \Ecalhat}$. 

The proposed approach requires an initial root-$n$ consistent estimator, which is often easy to obtain when the partial scores are unbiased. One simple
option entails solving $\efn u(w,\theta)=0$ with $w=(1, \dots, 1)^T$. 
If $m$ is  large, one may choose $w$ by the    
stochastic CL strategy of
\cite{dillon2010stochastic}, where the elements of $w$ may be set as either 0
or 1 randomly
according to some user-specified scheme. Although the initial estimator $\thetahat$ could
be quite inefficient, the one-step update (\ref{eq:Estep}) 
improves upon this situation. 
Moreover, the estimator $\thetahat_\lambda$  and coefficients
$\what_\lambda$ can be refined by iterating the T-Step (with
$\thetahat = \thetahat_\lambda$) and the E-Step a few times.

\subsection{Computational aspects: LARS implementation and selection of $\lambda$} \label{Sec:lambda}

The empirical composition rule $\what_\lambda$ in (\ref{eq:LAstep}) cannot be computed using derivative-based 
approaches due to non-differentiability of $\hat Q_\lambda(\thetahat, w)$. To address this issue, we propose an implementation based on the
least-angle regression (LARS) algorithm of \cite{Efron04} originally developed for sparse parameter estimation in the context of linear regression models. For given $\theta=\thetahat$, our implementation of LARS  minimizes $\hat 
Q_\lambda(\thetahat, w)$ by including one score  $u_j(\thetahat)$ at the time in the composite likelihood score $u(\thetahat, w)$.  In each step, the score with the largest correlation with the currently available residual difference 
$u_j(\thetahat) - u(\thetahat, w)$ is included, followed by an adjustment step on $w$.  The numerical examples in Section \ref{sec:NumExample},  suggest that our implementation of the LARS algorithm for CL selection is very fast. In at most $m \times p$ steps, it returns a path of estimated composition rules  $\what_{\lambda_1}, \dots, \what_{\lambda_m}$, where $\lambda_j$ here is the value of the tuning constant $\lambda$ in (\ref{eq:objective_empirical}) at which the $j$th partial score  enters the CL estimating equation.

Selection of $\lambda$ is of practical importance since it balances the trade-off between statistical and computational efficiency. For a given  budget on afforded computing, say  $\lambda^\ast$,  we  include one partial score at the time, for example using the LARS approach above,  and stop when we reach $\hat{\lambda}=\max \{   \lambda: \phi(\lambda) > \tau \}$, for some user-specified $0 <\tau \leq 1$, where 
\begin{equation}\label{eq:lambda}
\phi(\lambda) = \frac{   \text{tr}\{\efn S_{\lambda} (\thetahat)\}      }{     \text{tr}\{\efn S (\thetahat)\}          }I(\lambda >\lambda^\ast).
\end{equation}
Here $\efn S_{\lambda}= \efn U^T_{\lambda} U_{\lambda}$ denotes the empirical covariance matrix for the selected partial scores indexed by the set $\Ecalhat_\lambda = \{ j: \what_{\lambda,j} \neq 0 \}$. The criterion $\phi(\lambda)$ can be viewed as the proportion of score variability  explained by currently selected partial scores.  In practice,  we choose $\tau$ close to 1, such as $\tau = 0.9$, $0.95$ or $0.99$. If the computing budget is reached, we set $\hat \lambda = \lambda^\ast$. In analogy with principal component analysis, the selected combination of scores accounts for the largest variability in the collection of empirical scores.

\section{Properties}\label{sec:properties}

This section investigates the asymptotic behavior of the sparse composition rule $\what_\lambda$ and the corresponding SCLE $\thetahatlambdan$ defined in (\ref{eq:Estep}) within a setting where  $m$ -- the number of candidate partial likelihoods -- is allowed to grow with the sample size $n$. We use $m^\ast=E\|u^{ML}(\theta^\ast)\|_2^2$ to denote the trace of Fisher information based on the full likelihood. Here $m^\ast$  may be interpreted as the  maximum  knowledge about $\theta$ if the full likelihood score $u^{ML}$ were available. Although $m^\ast$ can  grow with $m$, reflecting the rather natural  notion the one can learn more about the true model as the overall data size increases, it is not allowed to grow as fast as $n$; e.g., $m^\ast=o(\log n)$.  This is a rather common situation in CL estimation occuring, for instance, when the sub-likelihood scores are substantially correlated or they are independent but with heterogeneous and increasing variances (see examples in Section \ref{sec:anaexample1}).

\subsection{Sparsity and optimality of the composition rule} \label{sec: theory1}

In this section, we  give conditions ensuring uniqueness of the empirical composition rule $\what_{\lambda}$ and  weak convergence  to its population counterpart $\wstarlambdan$.  To this end, we work $\theta$ within the root-$n$ neighborhood of $\theta^\ast$, $\Theta_n=\{\theta: \Vert \theta - \theta^\ast \Vert < c_0  {n}^{-1/2}\}$, for some $c_0>0$, and assume the following regularity conditions on  $S(\theta)$:
\begin{itemize} 
	\item[C.1]\label{cond_c1} There exist positive constants $c_1$, $c_2>0$ such that  $E\{\sup_{\theta \in \Theta_n} S(\theta)_{j,k}\}<c_1$, and $Var\{\sup_{\theta \in \Theta_n} S(\theta)_{j,k}\}<c_2$, for all $j,k \geq 1$.
	\item[C.2]\label{cond_c2} Each element $ES(\theta)_{j,k}$ is continuous with uniformly bounded first and second order derivatives on $\Theta_n$.
\end{itemize}
Our analysis begins by deriving the Karush-Kuhn-Tucker Condition (KKTC) \citep{kuhn2014nonlinear} for the population objective $ Q_\lambda(\theta^\ast, w)$ defined in (\ref{eq:criterion}). The KKTC characterizes the amount of sparsity -- and, the computational complexity -- associated with the selected estimating  equations  depending on the value of the tuning constant $\lambda$.  Let $c(\theta,w)=\text{diag}(S(\theta))-S(\theta)w$, where $S(\theta)= U(\theta)U(\theta)^T$ is as defined in Section \ref{sec:method} 

\begin{lemma}[KKTC] \label{Lem:KKT} Under Condition  \emph{C.1}, the minimizer $w^\ast_\lambda$ of $Q_\lambda(\theta^\ast, w)$ defined in (\ref{eq:criterion}) 
	satisfies
	\begin{equation*}
	E \vert \{c(\theta^\ast,   w^\ast_\lambda)_j\} \vert =\lambda\cdot \gamma_j, \ \  j = 1, \dots, m,  
	\end{equation*}
	where $\gamma_j \in \{1\}$ if $w^\ast_{\lambda,j} >0$, $\gamma_j \in \{-1\}$ if $w^\ast_{\lambda,j} <0$, and  $\gamma_j \in [-1, 1]$ if 
	$w^\ast_{\lambda,j} = 0$;  $c(\cdot,   \cdot)_j$ is the $j$th element of vector $c(\cdot,   \cdot)$.
\end{lemma}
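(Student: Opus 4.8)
The plan is to obtain the claimed identities as the first-order (subdifferential) optimality condition for a convex penalized least-squares problem, the only points needing care being that Condition C.1 makes $Q_\lambda(\theta^\ast, \cdot)$ a proper finite convex function and that the second-moment identity used in Section \ref{sec:twostep} lets us identify the linear term. First I would note that for each realization of the partial scores $w \mapsto \| u^{ML}(\theta^\ast) - \sum_j w_j u_j(\theta^\ast) \|_2^2$ is an affine map composed with the squared Euclidean norm, hence convex, so its expectation is convex, and adding $\lambda \sum_j |w_j|$ with $\lambda \geq 0$ keeps $Q_\lambda(\theta^\ast, \cdot)$ convex on $\mathbb{R}^m$. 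Since the minimizer is unchanged by passing to the equivalent objective (\ref{eq:crit2}), whose quadratic coefficients $E\{S(\theta^\ast)_{j,k}\}$ are finite for all $j,k$ under C.1, the reduced objective $\tfrac{1}{2} w^T E\{S(\theta^\ast)\} w - \sum_j w_j E\{u^{ML}(\theta^\ast)^T u_j(\theta^\ast)\} + \lambda \sum_j |w_j|$ is finite-valued and proper; ordinary subdifferential calculus then applies, and the sum rule holds because the quadratic part is everywhere differentiable.

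Next I would identify the gradient of the smooth part with $-E\{c(\theta^\ast, w)\}$. By unbiasedness of the partial scores and differentiating $0 = Eu_j(\theta)$ under the regularity conditions — the Bartlett-type identity already invoked in Section \ref{sec:twostep} — one has $E\{u^{ML}(\theta^\ast)^T u_j(\theta^\ast)\} = E\{S(\theta^\ast)_{j,j}\}$, so the linear coefficient vector equals $\text{diag}(E\{S(\theta^\ast)\})$. Differentiating the smooth part of the reduced objective in $w$ then gives $E\{S(\theta^\ast)\} w - \text{diag}(E\{S(\theta^\ast)\}) = -\big(\text{diag}(E\{S(\theta^\ast)\}) - E\{S(\theta^\ast)\} w\big) = -E\{c(\theta^\ast, w)\}$, the last equality being immediate because $c(\theta^\ast, w) = \text{diag}(S(\theta^\ast)) - S(\theta^\ast) w$ is linear in $w$ with integrable coefficients, so expectation passes through.

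Finally I would invoke the optimality condition for convex minimization: $w^\ast_\lambda$ minimizes $Q_\lambda(\theta^\ast, \cdot)$ if and only if $0 \in -E\{c(\theta^\ast, w^\ast_\lambda)\} + \lambda\, \partial\|w^\ast_\lambda\|_1$, i.e.\ there is $\gamma \in \partial\|w^\ast_\lambda\|_1$ with $E\{c(\theta^\ast, w^\ast_\lambda)\} = \lambda \gamma$. Reading this coordinatewise and using $\partial|t| = \{\text{sign}(t)\}$ for $t \neq 0$ and $\partial|0| = [-1,1]$ yields $E\{c(\theta^\ast, w^\ast_\lambda)_j\} = \lambda \gamma_j$ with $\gamma_j = 1$, $\gamma_j = -1$, or $\gamma_j \in [-1,1]$ according as $w^\ast_{\lambda,j}$ is positive, negative, or zero, which is the assertion of the lemma. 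I do not expect a genuine obstacle here; the only lines worth stating carefully are the appeal to C.1 to guarantee $Q_\lambda(\theta^\ast, \cdot)$ is a proper finite convex function (so the subgradient argument is legitimate) and the citation of the Bartlett-type identity used to replace $E\{u^{ML}(\theta^\ast)^T u_j(\theta^\ast)\}$ by $E\{S(\theta^\ast)_{j,j}\}$.
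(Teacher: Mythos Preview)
Your argument is correct. Both you and the paper arrive at the same coordinatewise conditions, but the routes differ. The paper does not invoke subdifferential calculus at all: it works directly from the definition of a minimizer via a one-coordinate perturbation $\epsilon = (0,\dots,\epsilon_j,\dots,0)^T$, writes the exact second-order expansion $Q_\lambda(\theta^\ast, w^\ast_\lambda + \epsilon) = Q_\lambda(\theta^\ast, w^\ast_\lambda) + \epsilon_j d_j + \tfrac{\epsilon_j^2}{2}\,\text{tr}\{I_j(\theta^\ast)\}$ with $d_j = -Ec(\theta^\ast, w^\ast_\lambda)_j + \lambda\,\text{sign}(w^\ast_{\lambda,j})$, and then argues by contradiction that $d_j \neq 0$ (respectively $|Ec(\theta^\ast,w^\ast_\lambda)_j|>\lambda$ when $w^\ast_{\lambda,j}=0$) would allow a descent direction; Condition C.1 enters only to bound the quadratic term $\text{tr}\{I_j(\theta^\ast)\}<c_1$ so that a sufficiently small $\epsilon_j$ exists. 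Your approach packages the same information as the first-order optimality condition $0 \in -E\{c(\theta^\ast,w^\ast_\lambda)\} + \lambda\,\partial\|w^\ast_\lambda\|_1$, which is shorter and more transparent for readers familiar with convex analysis; the paper's hands-on perturbation is more self-contained and avoids citing the sum rule for subdifferentials. A small remark: the Bartlett-type identity you highlight is also implicitly used in the paper's expansion (it is what makes the linear coefficient equal to $-Ec(\theta^\ast,w^\ast_\lambda)_j$), so your making it explicit is an improvement in clarity rather than an additional hypothesis.
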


\begin{proof} Let
	$
	d_j = 
	- E c(\theta^\ast,w^\ast_\lambda)_j+\lambda \cdot \text{sign}(w^\ast_{\lambda, j})
	$ and note that the Tayor expansion of $Q_\lambda(\theta^\ast, w^\ast_\lambda)$ around $w^\ast_{\lambda, j}\neq 0$ is
	\begin{equation}\label{eq:expansion}
	Q_{\lambda}(\theta^\ast,w^\ast_\lambda+\epsilon)=  Q_{\lambda}(\theta^\ast, w^\ast_\lambda)+\epsilon d_j +  \dfrac{{\epsilon}^2}{2} \text{tr}\{I_j(\theta^\ast)\},
	\end{equation}
	where $\epsilon=(0,\dots,   \epsilon_j,\dots,0)^T$, and $I_j(\theta^\ast) =  E\left[ 
	u_j(\theta^\ast)u_j(\theta^\ast)^T \right]$ is the $p\times p$ Fisher information matrix for the $j$th likelihood component, and 
	$\text{tr}\{ I_j(\theta^\ast) \}< c_1$ by Condition 
	C.1. 
	
	If $w^\ast_{\lambda,j}  \neq 0$, we have $d_j =0$. Otherwise, if $d_j \neq 0$, choosing $\epsilon_j$ such that $\text{sign}(\epsilon_j)= - \text{sign}(d_j)$ and $|\epsilon_j| < 2|d_j|/c_1$, 
	implies $Q_{\lambda}(\theta^\ast,w^\ast_\lambda+\epsilon)<  Q_{\lambda}(\theta^\ast,w^\ast_\lambda)$,  but this is a contradiction 
	since $w^\ast_\lambda$ minimizes $ Q_{\lambda}(\theta^\ast,\cdot)$.  If $w^\ast_{\lambda,j} = 0$, we need to show $| Ec(\theta^\ast,w^\ast_\lambda)_j | \leq \lambda$. Assume $| Ec(\theta^\ast,w^\ast_\lambda)_j | > \lambda$ and take $\epsilon_j$ such that $\text{sign}(\epsilon_j)= \text{sign}(Ec(\theta^\ast, w^\ast_\lambda)_j)$ and $|\epsilon_j|< 2|Ec(\theta^\ast, w^\ast_\lambda)_j-\lambda|/|c_1|$.  Then $\epsilon d_j + \epsilon^2 \text{tr}\{I(\theta^\ast)\}/2 < -|\epsilon_j| (|Ec(\theta^\ast, w^\ast_\lambda)_j|-\lambda) + \epsilon^2 c_1/2<0$, which implies $Q_\lambda(\theta^\ast,w^\ast_\lambda + \epsilon)<Q_\lambda(\theta^\ast,w^\ast_\lambda)$. But this is contradicted by  $w^\ast_\lambda$ being the minimizer of $Q_\lambda$. Hence, $E c(\theta^\ast,   w^\ast_\lambda)_j=\lambda\cdot \gamma_j$, for all $j=1,\dots,m$.  
\end{proof}

An argument analogous to that used in the proof of Lemma \ref{Lem:KKT} leads to the KKTC for $\what_\lambda$, the minimizer of the empirical loss $\widehat{Q}(\thetahat,w)$. Specifically, for $\what_\lambda$ we have $
\efn c(\thetahat,   \what_\lambda)_j=\lambda\cdot \hat{\gamma}_j, \ \  j = 1, \dots, m  
$, where $\hat{\gamma}_j \in \{1\}$ if $\what_{\lambda,j} >0$, $\hat{\gamma}_j \in \{-1\}$ if $\what_{\lambda,j} <0$, and  $\hat{\gamma}_j \in [-1, 1]$ if 
$\what_{\lambda,j} = 0$.

Lemma \ref{Lem:KKT} has important implications in our current setting, since it relates $\lambda$ to the size of the covariance between the $j$th sub-likelihood 
score $u_j(\theta)$ and the residual difference $u^{ML}(\theta)-u(\theta,w)$ at $\theta = \theta^\ast$. Particularly, if such a covariance is sufficiently small, i.e.
\begin{align*}
\lambda> E\{c(\theta^\ast,   w^\ast_\lambda)_j \} = \left|  E\left\{ u_j(\theta^\ast)\left[ u(\theta^\ast, w^\ast_\lambda) - u_j(\theta^\ast) \right] \right\} \right| 
= \left|  E\left\{ u_j(\theta^\ast)\left[ u(\theta^\ast, w^\ast_\lambda) - u^{ML}(\theta^\ast) \right] \right\} \right| ,
\end{align*}
then the correspondent coefficient is $w^\ast_{\lambda,j} = 0$. Thus, the tuning parameter $\lambda$ controls the level of sparsity of the composite score $u(\theta^\ast, w^\ast_\lambda)$ by forcing the weights of those non-important score components with small pseudo-covariance $c(\theta^\ast,w^\ast_\lambda)_j$ to be exactly zero.

For uniqueness of  $w^\ast_\lambda$ and $\what_\lambda$,   a simple condition is that the partial scores cannot replace each other, i.e. we  require that the scores are in general position. Specifically, we say that the score components $u_1,\dots,u_m$ are in general position if any affine subspace $\mathbb{L} \subset \mathbb{R}^m$ of dimension $l<m$ contains at most $l+1$ elements of $\{ \pm u_1,...,\pm u_m \}$  excluding antipodal pairs of points. 
\begin{itemize}
	\item[C.3] The partial scores $u_j(x,\theta)$, $j\geq 1$, are continuous and in general position with probability 1 for all $\theta \in \Theta_n$.
\end{itemize}

\begin{theorem} \label{Thm:uniqueroots}
	Under Conditions \emph{C.1}-\emph{C.3} the solution of the T-Step,  $\what_\lambda$, defined in (\ref{eq:LAstep}) is unique and is given by (\ref{eq:whatsol_emp}) for any $ \lambda>0$. Moreover, $\what_\lambda$ contains at most $np \wedge m$ non-zero elements.
\end{theorem}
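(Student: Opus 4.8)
The plan is to read the T-Step as an $\ell_{1}$-penalized quadratic program and then run the classical general-position uniqueness argument for the LASSO, adapted to the data-dependent quadratic form appearing here. Write $\efn S(\thetahat)=G^{T}G$, where $G$ is the $np\times m$ matrix whose $j$th column $G_{j}$ stacks the scaled evaluations $n^{-1/2}u_{j}(X^{(i)},\thetahat)$, $i=1,\dots,n$, so that $\efn\|\sum_{j}w_{j}u_{j}(\thetahat)\|_{2}^{2}=\|Gw\|_{2}^{2}$ and $\efn[u_{j}(\thetahat)^{T}u_{j}(\thetahat)]=\operatorname{diag}\{\efn S(\thetahat)\}_{j}=\|G_{j}\|_{2}^{2}$. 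Then
\[
\widehat Q_{\lambda}(\thetahat,w)=\tfrac12\,w^{T}\efn S(\thetahat)\,w-\operatorname{diag}\{\efn S(\thetahat)\}^{T}w+\lambda\|w\|_{1},
\]
which is convex in $w$, and whose subgradient optimality conditions are exactly the empirical KKT relations stated after Lemma~\ref{Lem:KKT}, because $\efn c(\thetahat,w)_{j}=\operatorname{diag}\{\efn S(\thetahat)\}_{j}-(\efn S(\thetahat)\,w)_{j}=\efn[u_{j}(\thetahat)^{T}r_{j}(\thetahat,w)]$.

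First I would show that the fitted composite score $\widehat\mu:=G\,\widehat w_{\lambda}$ is the same for every minimizer. If $w^{(0)}$ and $w^{(1)}$ both minimize $\widehat Q_{\lambda}(\thetahat,\cdot)$, convexity forces the objective to be constant on the segment joining them; along that segment it is the sum of the nonnegative quadratic $t\mapsto\tfrac12\|(1-t)Gw^{(0)}+tGw^{(1)}\|_{2}^{2}$, an affine term, and the convex function $t\mapsto\lambda\|(1-t)w^{(0)}+tw^{(1)}\|_{1}$, and a sum of convex functions can be constant only if each summand is affine; affineness of the quadratic piece forces its leading coefficient $\tfrac12\|G(w^{(1)}-w^{(0)})\|_{2}^{2}$ to vanish, i.e. $Gw^{(0)}=Gw^{(1)}$. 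Hence $\efn c(\thetahat,\widehat w_{\lambda})_{j}=\operatorname{diag}\{\efn S(\thetahat)\}_{j}-G_{j}^{T}\widehat\mu$ does not depend on the chosen minimizer, so the set $\Ecalhat$ in (\ref{eq:cov_with_mle}) is well defined; and for $j\notin\Ecalhat$ the KKT relation gives $|\widehat\gamma_{j}|<1$, which is possible only if $\widehat w_{\lambda,j}=0$, so $\widehat w_{\lambda,\setminus\Ecalhat}=0$ for every minimizer.

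The heart of the argument is to show that Condition~\emph{C.3} makes $\efn S_{\Ecalhat}(\thetahat)=G_{\Ecalhat}^{T}G_{\Ecalhat}$ nonsingular, where $G_{\Ecalhat}$ collects the columns $\{G_{j}:j\in\Ecalhat\}$. For $j\in\Ecalhat$ the identity $\bigl|\|G_{j}\|_{2}^{2}-G_{j}^{T}\widehat\mu\bigr|=\lambda$ rearranges to $\|G_{j}-\widehat\mu/2\|_{2}^{2}=\|\widehat\mu\|_{2}^{2}/4\pm\lambda$, so each active score vector lies on one of two concentric spheres; the paraboloid lift $v\mapsto(v,\|v\|_{2}^{2})$ maps these spheres to two parallel affine hyperplanes in $\mathbb R^{np+1}$, neither passing through the origin since $\lambda>0$. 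General position then prevents the $|\Ecalhat|$ lifted active vectors from being affinely dependent, and because they lie on affine hyperplanes avoiding the origin, affine independence upgrades to linear independence of $\{G_{j}:j\in\Ecalhat\}$ after the appropriate sign flips; this is the adaptation of the classical LASSO-uniqueness argument for general-position designs to the present data-dependent quadratic. With $\efn S_{\Ecalhat}(\thetahat)$ invertible, substituting $\widehat w_{\lambda,\setminus\Ecalhat}=0$ into the KKT relations indexed by $\Ecalhat$ yields $\operatorname{diag}\{\efn S_{\Ecalhat}(\thetahat)\}-\efn S_{\Ecalhat}(\thetahat)\,\widehat w_{\lambda,\Ecalhat}=\lambda\,\operatorname{sign}(\widehat w_{\lambda,\Ecalhat})$ — the subgradient being a genuine sign vector because general position also rules out $\widehat w_{\lambda,j}=0$ for $j\in\Ecalhat$ — and this is exactly (\ref{eq:whatsol_emp}); uniqueness of $\widehat w_{\lambda,\Ecalhat}$ follows since the linear system determines it once the sign vector is pinned down by the support.

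Finally, $\operatorname{supp}(\widehat w_{\lambda})\subseteq\Ecalhat$ together with $G_{\Ecalhat}\in\mathbb R^{np\times|\Ecalhat|}$ having full column rank forces $|\Ecalhat|\le np$, and trivially $|\Ecalhat|\le m$, so $\widehat w_{\lambda}$ has at most $np\wedge m$ nonzero entries. I expect the main obstacle to be the general-position step of the previous paragraph: linearizing the equicorrelation condition via the paraboloid lift and handling the antipodal-pair bookkeeping carefully enough that affine independence in $\mathbb R^{np+1}$ delivers linear independence of the unlifted columns $\{G_{j}:j\in\Ecalhat\}$, hence the sharp bound $np$ rather than $np+1$. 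The remaining steps are routine convex analysis, with Condition~\emph{C.1} entering only to ensure that $\widehat Q_{\lambda}(\thetahat,\cdot)$ is finite and attains its minimum.
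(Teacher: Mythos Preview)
Your overall architecture mirrors the paper's: uniqueness of the fitted score $G\what_\lambda$ by convexity, well-definedness of the equicorrelation set $\Ecalhat$ via the empirical KKT conditions, full column rank of $G_{\Ecalhat}$ from general position, and then the explicit formula (\ref{eq:whatsol_emp}) together with the cardinality bound. Your segment argument for uniqueness of $G\what_\lambda$ is in fact cleaner than the paper's bare appeal to ``strict convexity of $\hat Q_\lambda(\thetahat,\cdot)$'' (which is literally false when $m>np$), though both reach the same conclusion.

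The substantive divergence is in the full-rank step. The paper does not lift to $\mathbb{R}^{np+1}$. It argues directly: if $\efn S_{\Ecalhat}(\thetahat)$ were singular, some row is a combination of the others, say $\efn[u_k(\thetahat)^T U_{\Ecalhat}(\thetahat)]=\sum_{j\neq k}a_j\,\efn[u_j(\thetahat)^T U_{\Ecalhat}(\thetahat)]$; dotting with $\what_{\lambda,\Ecalhat}$ and inserting the active KKT equalities produces the single scalar constraint
\[
\efn u_k(\thetahat)^2-\sum_{j\neq k}a_j\,\efn u_j(\thetahat)^2=\lambda\hat\gamma_k-\sum_{j\neq k}a_j\lambda\hat\gamma_j,
\]
declared a probability-zero event by continuity and C.3. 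Full column rank of $G_{\Ecalhat}\in\mathbb{R}^{np\times|\Ecalhat|}$ then gives $|\Ecalhat|\le np$ immediately, and strict convexity of the \emph{restricted} objective finishes uniqueness.

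Your paraboloid lift is an attractive geometric reformulation, but the two obstacles you flag are genuine. First, Condition~C.3 places the columns $G_j$ in general position in $\mathbb{R}^{np}$, not the lifted points $(G_j,\|G_j\|_2^2)$ in $\mathbb{R}^{np+1}$; the lift is nonlinear, so general position does not transfer automatically and you would need an additional argument. Second, even granting general position upstairs, the active lifted points sit on \emph{two} parallel hyperplanes of dimension $np$, so the naive count is far from the sharp bound $np$. If you unwind in coordinates what ``the lifted active points are affinely independent'' actually says, you recover precisely the displayed scalar constraint above --- so the paraboloid detour, once made rigorous, collapses to the paper's direct algebraic contradiction. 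I would replace the lift by that one-line argument; everything else in your proposal then goes through unchanged.
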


\begin{proof} Let  $\hat\Ecal=\{ j \in \{1, \dots, m\}: | \hat\gamma_j|= 1 \}$ to be the index set of non-zero elements of  $\what_\lambda$ where  where $\gamma _j$ is as defined after Lemma \ref{Lem:KKT}.  First note that the composite likelihood score $u(\thetahat,\what_\lambda)=U(\thetahat)^T  \what_\lambda$ is unique  for all solutions $\what_\lambda$ which minimize  $ \hat Q_\lambda(\thetahat,\cdot)$ defined in (\ref{eq:objective_empirical}), due to  strict convexity of $\hat Q_\lambda(\thetahat,w)$. Uniqueness of $u(\thetahat, \what_\lambda)$ implies that $\hat\gamma$ and the corresponding index set $\hat\Ecal$ are unique by Lemma \ref{Lem:KKT}.  
	
	Next, to show uniqueness of  $\what_{\lambda,j}$ for all $j \in { {\hat\Ecal}}$, we first note that the square matrix $\efn[U_{ {\hat\Ecal}}(\thetahat)^T U_{ {\hat\Ecal}}(\thetahat)]$ has full rank. Otherwise, some row the matrix can be written as a linear combination of other rows in the set ${ {\hat\Ecal}}$, i.e. $\efn[u_{k}(\thetahat)^T U_{ {\hat\Ecal}}(\thetahat)] =\sum_{k\neq j}a_j \efn[u_j(\thetahat)^T U_{ {\hat\Ecal}}(\thetahat)] $. Then Lemma \ref{Lem:KKT} implies also the event $\efn u_k(\thetahat)^2 - \sum_{j\neq k} a_j \efn u_j(\thetahat)^2 = \lambda \hat\gamma_k - \sum_{j\neq k} a_j\lambda \hat\gamma_j$ for the same set of coefficients $a_j$s, which has probability equals to 0 since each $u_j$ is continuous and random.  Thus, $E[U_{ {\hat\Ecal}}(\thetahat) U_{ {\hat\Ecal}}(\thetahat)^T]$ has full rank, meaning that the size of ${ {\hat\Ecal}}$ satisfies $|{ {\hat\Ecal}}| \leq np \wedge m$.  For fixed  $w_j=0$, $j \in \setminus { {\hat\Ecal}}$,   full rank of $E[U_{ {\hat\Ecal}}(\thetahat) U_{ {\hat\Ecal}}(\thetahat)^T]$  implies strict convexity of  $ {\hat Q}_\lambda(\thetahat,w_{\lambda, \hat\Ecal})$ where $w_{\lambda, \hat\Ecal}$ is the sub-vector of $w$ containing elements indexed by $\hat\Ecal$.  Hence, $\what_{\lambda,\hat\Ecal}$ is unique.  \end{proof}

The arguments in Theorem \ref{Thm:uniqueroots} go through essentially unchanged  for the population composition rule $w^\ast_\lambda$ by showing the full rank of $E[U_{ {\Ecal}}(\theta^\ast)^T U_{ {\Ecal}}(\theta^\ast)]$ using Condition C.3 and Lemma \ref{Lem:KKT}, where $\Ecal$ is the index set of non-zero elements in $w^\ast_\lambda$. This implies also uniqueness of $w^\ast_\lambda$.  Next, we turn to convergence of the empirical composition rule $ \what_\lambda$ to $w^\ast_\lambda$, thus showing that the objective 
$\widehat{Q}_\lambda(\theta,w)$  (\ref{eq:objective_empirical}) is a suitable replacement for the intractable criterion $Q_\lambda(\theta,w)$ (\ref{eq:criterion}). Since  criterion 
$$
\hat Q_\lambda(\thetahat, w)=   w^T \efn \{S(\thetahat) \}w /2-\efn \{\text{diag}(S(\thetahat))\}^T w +\lambda\| w\|_1 
$$ 
is used as an approximation of the population criterion $Q_\lambda(\theta^\ast,w)$ defined in (\ref{eq:criterion}), clearly the distance between $\efn S(\thetahat)$ and $ES(\theta^\ast)$ affects the accuracy of such an approximation. Let $r_1=\sup_{\theta \in \Theta_n}\|  \efn S(\theta) -E S(\theta^\ast)  \|_2$  be the supreme variation between matrices $\efn S(\thetahat)$ and $ES(\theta^\ast)$, where $\| A \|_2$ is the matrix induced 2-norm for matrix $A$.   As $n \to \infty$, the rate at which $r_1$ goes to 0 depends mainly on the number of partial scores $m$ and the behavior of the random elements in $S$, which can vary considerably in different models. For example, when the elements of $S(\theta)$ are sub-Gaussian,  one needs only $\log(m)/n=o(1)$ \citep{cai2010optimal}. In more general cases, $m^4/n=o(1)$ suffices to ensure $r_1 = o_p(1)$ \citep{vershynin2012close}. 

Next we investigate how $m$ and ${m^\ast}$ should increase compared to $r_1$ when $\lambda \rightarrow 0$ as $n\rightarrow \infty$  to ensure a suitable behavior for $\what_\lambda$.  To obtain weak convergence of  $\what_\lambda$ to $w^\ast_\lambda$, we  introduce the additional requirement that the covariance matrix of the partial scores $ES(\theta^\ast)$ does not shrink to zero too fast. 
\begin{itemize}
	\item[C.4] There exists a sequence $c_n$, such that $c_n\frac{\lambda^2}{r_1{m^\ast}^2} \to \infty$ and $x^T ES(\theta^\ast) x \geq c_n \| x \|_1$ for any  $x \in \mathbb{R}^m$, as $n \rightarrow \infty$.
\end{itemize}
Condition C.4 is analogous to the  compatibility condition in $\ell_1$-penalized least-squares estimation for regression  \citep{buhlmann2011statistics}, where it ensures a good behavior of the observed design matrix of regressors. Differently from the sparse regression setting, where Condition C.4 is applied to the set of true nonzero regression coefficients, here no sparsity assumption on the composition rule $w$ is imposed. 

\begin{theorem} \label{Thm:unifweights}
	Under Conditions \emph{C.1}-\emph{C.4}, if $ r_1{m^\ast}^2  \lambda^{-2}=o_p(1)$ then $\| \what_\lambda - \wstarlambdan \|_1 \overset{P}{\to} 0$,
	as $n\to\infty$. 
\end{theorem}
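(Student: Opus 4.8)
The plan is to adapt the classical \emph{basic inequality} argument of $\ell_1$-penalized least squares, using the KKT characterization of $\wstarlambdan$ from Lemma~\ref{Lem:KKT} together with the compatibility-type Condition~C.4, while carefully propagating the gap between the empirical and population criteria through $r_1$. Throughout I would work on the event $\{\thetahat\in\Theta_n\}$, which has probability tending to one by root-$n$ consistency of $\thetahat$; on that event $\|\efn S(\thetahat)-ES(\theta^\ast)\|_2\le r_1$ by the definition of $r_1$. Abbreviate $\hat\Sigma=\efn S(\thetahat)$, $\Sigma=ES(\theta^\ast)$, $\hat b=\efn\{\text{diag}(S(\thetahat))\}=\text{diag}(\hat\Sigma)$, $b=\text{diag}(\Sigma)$, $G_n(w)=\tfrac12 w^T\hat\Sigma w-\hat b^T w$, $G(w)=\tfrac12 w^T\Sigma w-b^T w$, and $\Delta=\what_\lambda-\wstarlambdan$, so that $\hat Q_\lambda(\thetahat,w)=G_n(w)+\lambda\|w\|_1$ and $Q_\lambda(\theta^\ast,w)=G(w)+\lambda\|w\|_1$ are convex with positive semidefinite Hessians $\hat\Sigma$ and $\Sigma$. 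A preliminary estimate I would record is $\|\wstarlambdan\|_1\le m^\ast/(2\lambda)$: since the squared-norm term in (\ref{eq:criterion}) is nonnegative, $\lambda\|\wstarlambdan\|_1\le Q_\lambda(\theta^\ast,\wstarlambdan)\le Q_\lambda(\theta^\ast,0)=\tfrac12 E\|u^{ML}(\theta^\ast)\|_2^2=\tfrac12 m^\ast$.

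The core of the proof is a single basic inequality. Because $\what_\lambda$ minimizes $\hat Q_\lambda(\thetahat,\cdot)$ we have $\hat Q_\lambda(\thetahat,\what_\lambda)\le\hat Q_\lambda(\thetahat,\wstarlambdan)$, and since $G_n$ is exactly quadratic, expanding it around $\wstarlambdan$ yields $\nabla G_n(\wstarlambdan)^T\Delta+\tfrac12\Delta^T\hat\Sigma\Delta\le\lambda\|\wstarlambdan\|_1-\lambda\|\what_\lambda\|_1$. I would then substitute the population stationarity identity supplied by Lemma~\ref{Lem:KKT}, namely $\Sigma\wstarlambdan-b=-\lambda\gamma^\ast$, where $\gamma^\ast$ is a subgradient of $\|\cdot\|_1$ at $\wstarlambdan$ with $\|\gamma^\ast\|_\infty\le1$; this rewrites $\nabla G_n(\wstarlambdan)=-\lambda\gamma^\ast+(\hat\Sigma-\Sigma)\wstarlambdan-(\hat b-b)$. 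Using the subgradient inequality $(\gamma^\ast)^T\Delta\le\|\what_\lambda\|_1-\|\wstarlambdan\|_1$ to cancel every term carrying the factor $\lambda$, I arrive at
\begin{equation*}
\tfrac12\,\Delta^T\hat\Sigma\,\Delta\ \le\ \bigl|\,[(\hat\Sigma-\Sigma)\wstarlambdan]^T\Delta\,\bigr|+\bigl|\,(\hat b-b)^T\Delta\,\bigr|.
\end{equation*}

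It remains to control both sides. On the event above, $\|\hat\Sigma-\Sigma\|_2\le r_1$ gives $|[(\hat\Sigma-\Sigma)\wstarlambdan]^T\Delta|\le r_1\|\wstarlambdan\|_2\|\Delta\|_2\le r_1\|\wstarlambdan\|_1\|\Delta\|_1\le (r_1 m^\ast/2\lambda)\|\Delta\|_1$, and $|(\hat b-b)^T\Delta|\le\|\hat b-b\|_\infty\|\Delta\|_1\le r_1\|\Delta\|_1$ because every diagonal entry of $\hat\Sigma-\Sigma$ is bounded in absolute value by $\|\hat\Sigma-\Sigma\|_2$; hence the right-hand side is at most $\rho_n\|\Delta\|_1$ with $\rho_n=O(r_1 m^\ast/\lambda)$. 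For the left-hand side I would write $\Delta^T\hat\Sigma\Delta=\Delta^T\Sigma\Delta+\Delta^T(\hat\Sigma-\Sigma)\Delta\ge\Delta^T\Sigma\Delta-r_1\|\Delta\|_2^2$ and apply the compatibility-type bound of Condition~C.4, $\Delta^T\Sigma\Delta\ge c_n\|\Delta\|_1^2$, together with $\|\Delta\|_2\le\|\Delta\|_1$, to obtain $\tfrac12(c_n-r_1)\|\Delta\|_1^2\le\rho_n\|\Delta\|_1$, i.e.\ $\|\Delta\|_1\le 2\rho_n/(c_n-r_1)$ whenever $\Delta\neq0$. The rate conditions then finish the job: the hypothesis $r_1(m^\ast)^2\lambda^{-2}=o_p(1)$ forces $r_1=o_p(1)$, while Condition~C.4's requirement $c_n\lambda^2/(r_1(m^\ast)^2)\to\infty$ yields both $r_1/c_n\to0$ and $\rho_n/c_n=O\bigl(r_1 m^\ast/(\lambda c_n)\bigr)\to0$ (the latter since $r_1 m^\ast/(\lambda c_n)=[r_1(m^\ast)^2/(\lambda^2 c_n)]\cdot[\lambda/m^\ast]$ and $\lambda/m^\ast=O(1)$ in the regime $\lambda\to0$), so $\|\Delta\|_1\le 2\rho_n/(c_n-r_1)\overset{P}{\to}0$; since the conditioning event has probability tending to one, $\|\what_\lambda-\wstarlambdan\|_1\overset{P}{\to}0$ unconditionally.

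I expect the main obstacle to be the cross term $[(\hat\Sigma-\Sigma)\wstarlambdan]^T\Delta$: taming it requires the a priori $\ell_1$-bound $\|\wstarlambdan\|_1\le m^\ast/(2\lambda)$ — this is precisely where $m^\ast$ enters — together with the exact balance of the two rate conditions, and one must in particular keep $c_n-r_1$ bounded away from zero, i.e.\ $r_1=o(c_n)$, which is exactly what Condition~C.4 secures. A related subtlety is that C.4 constrains the population matrix $\Sigma$, so the quadratic lower bound has to be transferred to the empirical $\hat\Sigma$ via the $r_1$-perturbation, which is harmless only because C.4 forces $c_n$ to dominate $r_1$. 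The remaining ingredients — the exact quadratic expansion of $G_n$, the substitution of the Lemma~\ref{Lem:KKT} identity, and the subgradient cancellation of the $\lambda$-terms — are routine.
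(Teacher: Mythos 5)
Your proposal follows the same core strategy as the paper's: the paper proves this theorem by combining a basic-inequality computation (Lemma \ref{Lem:covergeinuthetahat}, which starts from $\hat Q_\lambda(\thetahat,\what_\lambda)\le\hat Q_\lambda(\thetahat,\wstarlambdan)$ and cancels the $\lambda$-terms via the KKT characterization of Lemma \ref{Lem:KKT}, exactly as you do) with the a priori bounds $\|\wstarlambdan\|_1=O(m^\ast/\lambda)$ and $\|\what_\lambda\|_1=O_p(m^\ast/\lambda)$ from Lemma \ref{Lem:wnorm}, and then invokes Condition C.4 to convert a bound on the quadratic form $(\what_\lambda-\wstarlambdan)^T ES(\theta^\ast)(\what_\lambda-\wstarlambdan)$ into convergence of $\|\what_\lambda-\wstarlambdan\|_1$. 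Your cancellation of the penalty terms, the $r_1$-perturbation bounds, and the rate bookkeeping ($\rho_n/c_n\to0$, $r_1/c_n\to0$) all match the paper's calculations.

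The one substantive point of friction is the form in which you invoke C.4. You use the standard compatibility inequality $x^T ES(\theta^\ast)x\ge c_n\|x\|_1^2$, whereas the paper's C.4 is stated with $\|x\|_1$ to the \emph{first} power, and the exponent matters precisely at your final step, where you divide $\tfrac12(c_n-r_1)\|\Delta\|_1^2\le\rho_n\|\Delta\|_1$ through by $\|\Delta\|_1$. Under the paper's literal C.4 one must instead bound the quadratic form from above by a vanishing rate, $\Delta^T ES(\theta^\ast)\Delta\le 2\rho_n\|\Delta\|_1+r_1\|\Delta\|_1^2=O_p(r_1 m^{\ast 2}\lambda^{-2})$, and then divide by $c_n$; this is the paper's route, and it requires the a priori bound $\|\what_\lambda\|_1=O_p(m^\ast/\lambda)$ — not only $\|\wstarlambdan\|_1\le m^\ast/(2\lambda)$, which is all you establish — to control the remaining $\|\Delta\|_1$ factors. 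The repair is immediate (apply your own argument for $\|\wstarlambdan\|_1$ to $\hat Q_\lambda(\thetahat,\what_\lambda)\le\hat Q_\lambda(\thetahat,0)$, absorbing an extra $r_1\|\what_\lambda\|_1$ term), and it is fair to add that the paper's first-power version of C.4 cannot hold for all $x\in\mathbb{R}^m$ by homogeneity (rescale $x\mapsto tx$ and let $t\to0$), so your squared reading is arguably the intended one. Still, as a proof of the theorem against the conditions as stated, you should either switch to the paper's upper-bound-then-divide-by-$c_n$ step or state explicitly that you are reading C.4 in compatibility form; with that caveat the argument is correct and buys a slightly cleaner single-pass derivation that never needs an $\ell_1$ bound on $\what_\lambda$ itself.
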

\begin{proof}
	From Lemma \ref{Lem:covergeinuthetahat}, $\efn \|  U(\thetahat) (\what_\lambda-w^\ast_\lambda)  \|_2^2 = o_p(1)$. Note that
	\begin{align*} 
	&\efn \|  U(\thetahat) (\what_\lambda-w^\ast_\lambda)  \|_2^2= (\what_\lambda-w^\ast_\lambda)^T \efn S(\thetahat) (\what_\lambda-w^\ast_\lambda)\\
	=& (\what_\lambda-w^\ast_\lambda)^T ES(\theta^\ast) (\what_\lambda-w^\ast_\lambda) + (\what_\lambda-w^\ast_\lambda)^T \left\{ \efn S(\thetahat)  - ES(\theta^\ast)  \right\} (\what_\lambda-w^\ast_\lambda),
	\end{align*}
	and  the second term of the last equality is $O_p(r_1{m^\ast}^2\lambda^{-2})$ by Lemma \ref{Lem:wnorm}. Thus, $(\what_\lambda-w^\ast_\lambda)^T ES(\theta^\ast) (\what_\lambda-w^\ast_\lambda) = O_p(r_1{m^\ast}^2\lambda^{-2})$, which implies $\| \what_\lambda - \wstarlambdan \|_1 \overset{P}{\to} 0$ by Condition C.4.    \end{proof}

\begin{corollary} \label{Col:covergeinu} Let $\lambda$ be a sequence such that $\lambda \rightarrow 0$ as $n \rightarrow \infty$. 
	Under Conditions \emph{C.1}-\emph{C.4}, if $r_1{m^\ast}^2\lambda^{-2}=o_p(1)$, we have
	\begin{equation}\label{f5}
	\sup_{\theta \in \Theta_n}  \efn  \left 	\|   u(\theta,\what_\lambda)-    u(\theta,\wstarlambdan) \right\|_2 \overset{P}{\to} 0, \quad \text{as $n\ \to \infty$.}
	\end{equation}
\end{corollary}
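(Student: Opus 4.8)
The plan is to reduce the claim to a quadratic form in $v := \what_\lambda - \wstarlambdan$ and then reuse the bounds already obtained in the proof of Theorem~\ref{Thm:unifweights}. Since $w \mapsto u(\theta,w)$ is linear, $u(\theta,\what_\lambda) - u(\theta,\wstarlambdan) = u(\theta, v)$, and applying the Cauchy--Schwarz inequality to the empirical average over the $n$ observations gives
\begin{equation*}
\efn \bigl\| u(\theta, v) \bigr\|_2 \le \Bigl( \efn \bigl\| u(\theta, v) \bigr\|_2^2 \Bigr)^{1/2} = \bigl( v^T \efn S(\theta)\, v \bigr)^{1/2}.
\end{equation*}
Because $\what_\lambda$ and $\wstarlambdan$ do not depend on $\theta$, it therefore suffices to show $\sup_{\theta \in \Theta_n} v^T \efn S(\theta)\, v \arrowp 0$.

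For this I would split, exactly as in the proof of Theorem~\ref{Thm:unifweights},
\begin{equation*}
v^T \efn S(\theta)\, v = v^T E S(\theta^\ast)\, v + v^T \bigl\{ \efn S(\theta) - E S(\theta^\ast) \bigr\} v,
\end{equation*}
and handle the two pieces separately. The first term is $\theta$-free, and the proof of Theorem~\ref{Thm:unifweights} already shows $v^T E S(\theta^\ast)\, v = O_p(r_1 {m^\ast}^2 \lambda^{-2})$, which is $o_p(1)$ under the stated hypothesis. For the second term I would use the elementary bound $|v^T A v| \le \| A \|_2 \| v \|_2^2 \le \| A \|_2 \| v \|_1^2$, valid for any symmetric $A$, together with the definition $r_1 = \sup_{\theta \in \Theta_n} \| \efn S(\theta) - E S(\theta^\ast) \|_2$, to get
\begin{equation*}
\sup_{\theta \in \Theta_n} \bigl| v^T \bigl\{ \efn S(\theta) - E S(\theta^\ast) \bigr\} v \bigr| \le r_1\, \| v \|_1^2.
\end{equation*}
In the present regime $r_1 = o_p(1)$ (this is the standing requirement from the discussion preceding Condition~C.4, and in any case it follows from $r_1 {m^\ast}^2 \lambda^{-2} = o_p(1)$ together with $\lambda \to 0$), while Theorem~\ref{Thm:unifweights} gives $\| v \|_1 = \| \what_\lambda - \wstarlambdan \|_1 \arrowp 0$; hence $r_1 \| v \|_1^2 = o_p(1)$. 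Adding the two pieces gives $\sup_{\theta \in \Theta_n} v^T \efn S(\theta)\, v = o_p(1)$, and taking square roots yields (\ref{f5}).

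The only point that needs any attention is the uniformity over $\Theta_n$, and this turns out to come essentially for free: the dependence on $\theta$ enters only through $\efn S(\theta) - E S(\theta^\ast)$, whose operator norm is controlled uniformly on $\Theta_n$ by $r_1$ (which is where Conditions~C.1--C.2 are used), while the dominant population contribution $v^T E S(\theta^\ast)\, v$ does not involve $\theta$ at all. I therefore do not expect a genuine obstacle here; the argument is little more than bookkeeping layered on top of Theorem~\ref{Thm:unifweights}.
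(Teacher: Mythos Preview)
Your argument is correct and follows essentially the same route as the paper: reduce to the quadratic form $v^T\efn S(\theta)\,v$ with $v=\what_\lambda-\wstarlambdan$, split off a $\theta$-free piece that is already controlled in the proof of Theorem~\ref{Thm:unifweights}, and bound the remainder uniformly over $\Theta_n$ using $\|v\|_1\arrowp 0$. The only cosmetic difference is that the paper pivots around $\thetahat$ (invoking Lemma~\ref{Lem:covergeinuthetahat} and then bounding $v^T\efn[S(\theta)-S(\thetahat)]v$ via C.1--C.2), whereas you pivot around $\theta^\ast$ and use $r_1$ directly; both decompositions carry the same content.
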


\begin{proof}
	From Lemma \ref{Lem:covergeinuthetahat},  $ \efn   	\|   u(\thetahat,\what_\lambda)-    u(\thetahat,\wstarlambdan)  \|_2^2 =  o_p(1)$. The result follows by noting that for any $\theta \in \Theta_n$, the difference $\efn \|   u(\theta,\what_\lambda)-    u(\theta,\wstarlambdan)  \|_2^2- \efn\|   u(\thetahat,\what_\lambda)-    u(\thetahat,\wstarlambdan)  \|_2^2 =   (\what_\lambda-w^\ast_\lambda)^T \efn[S(\theta) -S(\thetahat)]  (\what_\lambda-w^\ast_\lambda)$ is $o_p(1)$ according to Conditions C.1, C.2 and Theorem \ref{Thm:unifweights}
\end{proof}

Corollary \ref{Col:covergeinu} states that the composite likelihood score $u(\theta,\what_\lambda)$ is a reasonable approximation to $u(\theta,w^\ast_\lambda)$. Particularly, even for $\lambda$ close to zero, the composite score $u(\theta,\what_\lambda)$ still uses a fraction $|\Ecalhat|/m$ of sub-likelihood components. At the same time $u(\theta,\what_\lambda)$ is near the optimal score $u(\theta,w^\ast_0)$,  where $w^\ast_0$ is the  composition rule yielding the closest CL score $u(\theta)$ to the maximum likelihood score $u^{ML}(\theta)$. Moreover, the implied Godambe information
$$
\mathcal{G}(\theta, \what_\lambda) = E \{ \nabla u(\theta, \what_\lambda) \}
var\{ \nabla u(\theta, \what_\lambda) \}^{-1}
E \nabla \{u(\theta, \what_\lambda) \}
$$
is expected to be close to $\mathcal{G}(\theta, w)$ with $w=w^\ast_0$. However, while the MCLE based on   
$w^\ast_0$ (or other choice of $w_j \neq 0$, $j\geq 1$)  may be unavailable or computationally intractable due to common difficulties in estimating $var\{ \nabla u(\theta, w^\ast_0) \}$ \citep{lindsay2011issues, varin2011overview}, our truncated composition rule $\whatlambdan $ implies a more stable estimation of $\mathcal{G}(\theta, \what_\lambda)$ by requiring only a fraction of scores.

\subsection{Asymptotic behavior of the one-step SCLE}

In this section, we show consistency and give the asymptotic distribution for the SCLE $\thetahatlambdan = \thetahat(\whatlambdan)$ defined in the E-Step (\ref{eq:Estep}). One advantage of one-step estimation is that consistency and asymptotic normality are treated separately. The one-step estimator $\thetahat_\lambda$ inherits the properties leading to consistency from the preliminary estimator $\thetahat$, under standard requirements on $S(\theta)$.   For normality, additional conditions on the sub-likelihood scores are needed. Let $H(\theta)$ be the $p \times mp$ matrix obtained by stacking all the $p \times p$
sub-matrices $\nabla u_j(\theta)$. Let $r_2=\sup_{\theta \in \Theta_n} \max_{j,k}| \efn H(\theta)_{j,k} -E H(\theta^\ast)_{j,k}  |$ be the maximum variation between the empirical and the optimal Hessian matrices.  Let $r_3=\sup_{\theta \in \Theta_n} \max_{j}\| \efn u_j(\thetahat) -E u_j(\theta^\ast)  \|_1$ be the supreme variation between empirical scores and their expected value around $\Theta_n$.  In the rest of this section, we use  $J^\ast_\lambda = Cov \left [u(\theta^\ast,\wstarlambdan) \right ]$ and $K^\ast_\lambda = -E \nabla u(\theta^\ast,\wstarlambdan)$ to denote the population variability and sensitivity $p \times p$  matrices, respectively, both depending implicitly on $n$. We further assume:
\begin{itemize}
	\item[C.5] There exist positive constants $c_5$ and $c_6$ such that $E[\sup_{\theta \in \Theta_n} H(\theta)_{j,k}]<c_5$, and\\ $Var[\sup_{\theta \in \Theta_n} H(\theta)_{j,k}]<c_6$, for all $j,k\geq 1$.          
	\item[C.6]  Each element $EH(\theta)_{j,k}$, $j,k \geq 1$ of the matrix $EH(\theta)$ is continuous with uniformly bounded first and second derivatives on $\theta \in \Theta_n$.
\end{itemize}

\begin{theorem} \label{Thm:consistent}
	Suppose there exist $N>0$ such that $K^\ast_\lambda$ is non-singular with all eigenvalues bounded away from 0 for all $n>N$. Under Conditions \emph{C.1} - \emph{C.6}, if $ r_1{m^\ast}^2 \lambda^{-2}=o_p(1)$, $ r_2\sqrt{{m^\ast}} \lambda^{-1} =o_p(1)$ and $r_3 m^\ast \lambda^{-1}=o_p(1)$, then as $n\to \infty$ we have
	\begin{itemize}
		\item[(i)]$\| \thetahatlambdan-\theta^\ast  \|_1 \overset{P}{\to} 0$, and
		\item[(ii)] $\sqrt{n} {J^\ast_\lambda}^{-\frac{1}{2}} K^\ast_\lambda (\thetahat_{\lambda}-\theta^\ast) \overset{D}{\to} N_p(0,I)$,
	\end{itemize}
	where  $J^\ast_\lambda = Cov \left [u(\theta^\ast,\wstarlambdan) \right ]$ and $K^\ast_\lambda = -E \nabla u(\theta^\ast,\wstarlambdan)$  denote $p \times p$ population variability and sensitivity   matrices.
\end{theorem}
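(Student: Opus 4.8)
\textbf{Proof proposal for Theorem \ref{Thm:consistent}.}

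The plan is to treat the two claims separately, exploiting the one-step structure of $\thetahatlambdan$ as recorded in (\ref{eq:Estep}). First I would write the one-step update around $\theta^\ast$ by a Taylor expansion of $\efn u(\thetahat,\whatlambdan)$ in the first argument: there is an intermediate point $\bar\theta$ on the segment joining $\thetahat$ and $\theta^\ast$ such that
\begin{equation*}
\thetahatlambdan - \theta^\ast = \left(\thetahat - \theta^\ast\right) - \left[\efn \nabla u(\thetahat,\whatlambdan)\right]^{-1}\left\{\efn u(\theta^\ast,\whatlambdan) + \efn \nabla u(\bar\theta,\whatlambdan)\left(\thetahat - \theta^\ast\right)\right\}.
\end{equation*}
This rearranges into a leading term $-\left[\efn \nabla u(\thetahat,\whatlambdan)\right]^{-1}\efn u(\theta^\ast,\whatlambdan)$ plus a remainder that is a product of $\left[\efn \nabla u(\thetahat,\whatlambdan)\right]^{-1}\left\{\efn \nabla u(\bar\theta,\whatlambdan) - \efn \nabla u(\thetahat,\whatlambdan)\right\}$ with $(\thetahat-\theta^\ast)$. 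For part (i), I would bound each factor: since $\thetahat$ is root-$n$ consistent, $\bar\theta,\thetahat \in \Theta_n$ with probability tending to one, so Conditions C.5--C.6 control the Hessian increments through $r_2$, while Theorem \ref{Thm:unifweights} gives $\|\whatlambdan - \wstarlambdan\|_1 \arrowp 0$; combining these with $r_2\sqrt{m^\ast}\lambda^{-1}=o_p(1)$ and $r_3 m^\ast\lambda^{-1}=o_p(1)$ shows that $\efn u(\theta^\ast,\whatlambdan)=o_p(1)$ and that the Hessian $\efn \nabla u(\thetahat,\whatlambdan)$ converges to $-K^\ast_\lambda$, whose eigenvalues are bounded away from zero by hypothesis for $n>N$. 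Hence the leading term is $o_p(1)$ and the remainder is $o_p(1)\cdot O_p(n^{-1/2})$, giving $\|\thetahatlambdan-\theta^\ast\|_1\arrowp 0$.

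For part (ii), I would return to the same expansion but now keep $\sqrt{n}$ scaling and extract the CLT term. After multiplying by $\sqrt{n}\,{J^\ast_\lambda}^{-1/2}K^\ast_\lambda$, the dominant contribution is ${J^\ast_\lambda}^{-1/2}K^\ast_\lambda\left[\efn \nabla u(\thetahat,\whatlambdan)\right]^{-1}\sqrt{n}\,\efn u(\theta^\ast,\whatlambdan)$. I would split $\sqrt{n}\,\efn u(\theta^\ast,\whatlambdan) = \sqrt{n}\,\efn u(\theta^\ast,\wstarlambdan) + \sqrt{n}\,\efn\{u(\theta^\ast,\whatlambdan) - u(\theta^\ast,\wstarlambdan)\}$. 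The first piece is a normalized i.i.d.\ sum with mean zero (unbiasedness (\ref{eq:sub_equations})) and covariance $J^\ast_\lambda$, so a triangular-array Lindeberg CLT applies and yields ${J^\ast_\lambda}^{-1/2}\sqrt{n}\,\efn u(\theta^\ast,\wstarlambdan)\arrowd N_p(0,I)$; the rate conditions on $m^\ast$ relative to $n$ (recall $m^\ast=o(\log n)$) ensure the Lindeberg condition holds despite the growing dimension of the score collection. The second piece equals $\left[\sqrt{n}\,\efn\{\text{stacked }u_j(\theta^\ast)\}\right]^T(\whatlambnda-\wstarlambdan)$ in an obvious notation, which by $r_3 m^\ast\lambda^{-1}=o_p(1)$, Theorem \ref{Thm:unifweights}, and a maximal-inequality bound on $\sqrt{n}\,\efn u_j(\theta^\ast)$ across $j$ is $o_p(1)$. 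Finally, replacing $\efn \nabla u(\thetahat,\whatlambdan)$ by $-K^\ast_\lambda$ costs only $o_p(1)$ by the part-(i) analysis, and ${J^\ast_\lambda}^{-1/2}K^\ast_\lambda(-K^\ast_\lambda)^{-1}(-K^\ast_\lambda^{-1})^{-1}$... more carefully, the matrix prefactors telescope to the identity, so Slutsky's theorem delivers the stated limit.

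The main obstacle I anticipate is controlling the cross term $\sqrt{n}\,\efn\{u(\theta^\ast,\whatlambdan) - u(\theta^\ast,\wstarlambdan)\}$ uniformly: because $\whatlambdan$ is data-dependent and the number of candidate scores $m$ diverges, one cannot simply use the $o_p(1)$ rate from Theorem \ref{Thm:unifweights} pointwise but must pair it with a uniform (in $j\le m$) control of the empirical score fluctuations $\sqrt{n}\,\|\efn u_j(\theta^\ast) - Eu_j(\theta^\ast)\|_1$, which is exactly where $r_3$ and the growth restriction on $m^\ast$ enter. A secondary delicate point is verifying the Lindeberg condition for the triangular array $u(\theta^\ast,\wstarlambdan)/\sqrt{n}$ when the effective dimension of the retained scores grows; here I would lean on Condition C.1 to bound fourth moments of the partial scores and on the fact that $\wstarlambdan$ has $\ell_1$-norm controlled (via Lemma \ref{Lem:wnorm}, cited in the proof of Theorem \ref{Thm:unifweights}), so that $J^\ast_\lambda$ stays well-conditioned in the sense needed. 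Everything else — the Taylor remainders, the Hessian convergence, the matrix algebra collapsing the prefactors — is routine given Conditions C.1--C.6 and the already-established Theorem \ref{Thm:unifweights} and Corollary \ref{Col:covergeinu}.
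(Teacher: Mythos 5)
Your proposal is correct and follows essentially the same route as the paper: the one-step Newton expansion with an intermediate-point Taylor term, the decomposition of $\sqrt{n}\,\efn u(\theta^\ast,\whatlambdan)$ into the Lindeberg CLT term $\sqrt{n}\,\efn u(\theta^\ast,\wstarlambdan)$ plus a cross term controlled via Theorem \ref{Thm:unifweights} and $r_3$, the Hessian increment bounded by $r_2\|\whatlambdan\|_1$, and Slutsky to collapse the prefactors. Your explicit remark that the cross term needs uniform-in-$j$ control of the empirical score fluctuations (rather than just the pointwise $\ell_1$ convergence of the weights) is in fact slightly more careful than the paper's own treatment of that step.
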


\begin{proof}
	Without loss of generality, we only prove the case $p=1$. Since $p$ is fixed, the proof can be easily generalized to the case $p>1$ without additional conditions. Let $\hat K_\lambda= -\efn \nabla u(\thetahat,\what_\lambda)$ be the empirical sensitivity matrix. Then $\thetahat_\lambda$ can be written as
	$
	\thetahatlambdan =\thetahat +   \hat K_\lambda ^{-1} \efn u(\thetahat,\what_\lambda)
	$, with $\thetahat$ being a consistent preliminary estimator.  Note that  $E u(\theta^\ast,w^\ast_\lambda)=0$ and 
	\begin{align}  
	&\|\efn u(\thetahat, \what_\lambda) - Eu(\theta^\ast, w^\ast_\lambda)\|_1 \leq  \label{eq:f3}
	\| \efn u(\thetahat, \what_\lambda) - \efn u(\thetahat, w^\ast_\lambda)\|_1 \\
	& + \| \efn u(\thetahat, w^\ast_\lambda) -  \efn u(\theta^\ast, w^\ast_\lambda)\|_1 
	+\| \efn u(\theta^\ast, w^\ast_\lambda) - E u(\theta^\ast, w^\ast_\lambda)  \|_1. \notag
	\end{align}
	The first term on the right hand side of (\ref{eq:f3}) is $o_p(1)$ by Lemma \ref{Lem:covergeinuthetahat}. The second term is $o_p(1)$ since $\| \efn u(\thetahat, w^\ast_\lambda) -  \efn u(\theta^\ast, w^\ast_\lambda)\|_1 \leq  \max_j|\efn u_j(\thetahat)-\efn u_j(\theta^\ast)| \| w^\ast_\lambda \|_1 $, which converges to 0 by Theorem's assumptions and Lemma \ref{Lem:wnorm}. The last term of (\ref{eq:f3}) is also $o_p(1)$ by the Law of Large Numbers. This shows that $\efn u(\thetahat, \what_\lambda) \overset{P}{\to} 0$.  Moreover, from Lemma \ref{Lem:dhnbarhnstar}, $ \| \hat K_\lambda - K^\ast_\lambda  \|_1 =o_p(1)$. Since $K^\ast_\lambda$ has all eigenvalues bounded away from 0 for large $n$,  we have $\hat K_\lambda^{-1} \efn u(\thetahat,\what_\lambda) \overset{P}{\to} 0$. Since $\thetahat \overset{P}{\to} \theta^\ast$, we have $
	\thetahatlambdan =\thetahat +   \hat K_\lambda^{-1} \efn u(\thetahat,\what_\lambda) \overset{P}{\to} \theta^\ast
	$, which shows part (i) of the theorem.	
	
	To show normality in (ii), re-arrange
	$
	\thetahatlambdan =\thetahat + \hat K_\lambda^{-1}  \efn u(\thetahat,\what_\lambda)
	$
	and obtain
	\begin{align}	 
	&\hat K_\lambda (\thetahatlambdan- \theta^\ast)  =  \hat K_\lambda (\thetahat-\theta^\ast) +\efn u(\thetahat,\what_\lambda)  
	=  \efn u(\theta^\ast,\what_\lambda) +  [\hat K_\lambda - \widetilde{ K}_\lambda] (\thetahat-\theta^\ast) \notag\\
	=& \efn u(\theta^\ast,w^\ast_\lambda) + [\efn u(\theta^\ast,\what_\lambda) - \efn u(\theta^\ast,w^\ast_\lambda) ] 
	+ [\hat K_\lambda - \widetilde{ K}_\lambda] (\thetahat-\theta^\ast), \label{eq:f4}
	\end{align}
	where $\widetilde K_\lambda= -\efn \nabla u(\widetilde \theta ,\what_\lambda)$ and $\widetilde{\theta}$ is some value between $\thetahat$ and $\theta^\ast$. The second equality follows from the first-order expansion of $\efn u(\theta^\ast,\what)$ at $\thetahat$.  For the first term in (\ref{eq:f4}), we have  $\sqrt{n} {J^\ast_\lambda}^{-\frac{1}{2}} \efn u(\theta^\ast,w^\ast_\lambda) \overset{D}{\rightarrow} N_p(0, I)$, since the Lindeberg-Feller Central Limit Theorem applies to $u(\theta^\ast,w^\ast_\lambda)$ by Lemma \ref{Lem:lindeberg}.  By Lemma \ref{Lem:varuorder} $J^\ast_\lambda=O(m^\ast)$, so the first term in (\ref{eq:f4}) is $O_p(\sqrt{m^\ast/n})$. The second term in (\ref{eq:f4})   $\efn u(\theta^\ast,\what_\lambda) - \efn u(\theta^\ast,w^\ast_\lambda) = \efn U(\theta^\ast)^T (\what_\lambda-w^\ast_\lambda)$  is of smaller order compared to first term
	$\efn u(\theta^\ast,w^\ast_\lambda)=\efn U(\theta^\ast)^T w^\ast_\lambda$ since $\|\what_\lambda - w^\ast_\lambda\|  \overset{P}{\rightarrow} 0$ by Theorem \ref{Thm:unifweights}.   For the last term in (\ref{eq:f4}), we have
	\begin{align}
	|(\hat K_\lambda -\widetilde K_\lambda)(\thetahat -\theta^\ast)|_1 \leq & \left\{ \max_j \left\vert \efn \nabla u_j(\thetahat)-  E\nabla u_j(\theta^\ast) \right\vert \right. \notag \\
	 &+   \left.  \max_j \left\vert \efn\nabla u_j(\widetilde{\theta}) -  E\nabla u_j(\theta^\ast)\right\vert \right\}
	\|\what_\lambda\|_1 |\thetahat-\theta^\ast|  \notag  \\
	\leq 
	& 2 r_2 \|\what_\lambda\|_1 |\thetahat-\theta^\ast|. \label{eq:third_term}
	\end{align}
	and the last expression in (\ref{eq:third_term}) is $o_p(r_2 m^\ast n^{-1/2}\lambda^{-1})$ by Lemma \ref{Lem:wnorm}. 
	Theorem's assumption that $r_1{m^\ast}^2 \lambda^{-2}=o_p(1)$ implies that the last term in (\ref{eq:f4}) is of smaller order compared to the first term. Finally, since $\|\hat K_\lambda -K^\ast_\lambda\|_1=o_p(1)$ according to Lemma \ref{Lem:dhnbarhnstar}, Slutsky's Theorem implies the desired result.      	
\end{proof}

Consistency and asymptotic normality for the one-step estimator $\thetahatlambdan$ follow mainly from $\what_\lambda$ converging in probability to the target composition rule $\wstarlambdan$. Since each sub-likelihood score is unbiased and asymptotically normal, their linear combination is also normally distributed. The overall convergence rate is given by $\|\sqrt{n} {J^\ast_\lambda}^{-\frac{1}{2}} K^\ast_\lambda\|_1$ which is of order between $\sqrt n$ and $\sqrt{nm}$. The actual order depends on the underlying correlation between  partial scores  $u_1, \dots, u_m$. While the optimal rate  $\sqrt{nm}$ is achieved when the scores are perfectly independent, combining highly correlated scores into the final estimating equation will give rates closer to $\sqrt{n}$.

\section{Examples for special families of models}\label{sec:example} In this section, we illustrate the SCLE through estimation of location and scale estimation for special multivariate normal models. 

\subsection{Estimation of common location for heterogeneous variates}  \label{sec:anaexample1}

Let $X \sim N_m(\theta 1_m, \Sigma)$, where the $m \times m$ covariance matrix $\Sigma$ has off-diagonal elements $\sigma_{jk}$ ($j\neq k$) and diagonal elements $\sigma^2_k$ ($j=k$). Computing the MLE of $\theta$  requires   $\Sigma^{-1}$ and in practice $\Sigma$ is replaced by the MLE $\hat{\Sigma} = \efn X^T X$. When $n<m$,   $\hat{\Sigma}$ is singular and the MLE of $\theta$ is not available in practice, whilst CL estimation is still feasible. The $j$th partial score is  $u_j(\theta)=(X_j-\theta)/ \sigma_j^2$ and the CL estimating equation (\ref{estimating_eq}) based on the sample $X^{(1)}, \dots, X^{(n)}$ is
$$
0 =  \efn u(\theta, w) = \sum_{j=1}^m   \dfrac{w_j}{n \sigma_j^{2} } \sum_{i=1}^n (X^{(i)}_j-\theta),
$$
leading to the profiled MCLE  
\begin{equation}\label{eq:estimator_example1}
\hat \theta(w)= \left(\sum_{j=1}^m w_j\sigma_j^{-2}\overline{X}_j \right)/ \left(\sum_{j=1}^m w_k\sigma_j^{-2}\right),
\end{equation}
which is a weighted average of marginal sample means $\overline{X}_j=n^{-1}\sum_{i=1}^n X_j^{(i)}$, $j\ge 1$. In this example, one can work out directly the optimal composition rule $w^\ast_\lambda$ and no estimation is required. Particularly, it is useful to inspect the special case where $X$ has independent components ($\sigma_{jk}=0$ for all $j\neq k$). This corresponds to the fixed-effect meta-analysis model where estimators from  $m$ independent studies are combined to improve accuracy. Under independence, we have the explicit solution
$$
w^\ast_{\lambda,j}= (1-\sigma^2_j \lambda) I\left(\sigma^2_j< \lambda^{-1}\right),  \ \ 1 \leq j \leq m,
$$
which highlights that overly noisy data subsets with variance $\sigma^2_j \geq \lambda^{-1}$ are dropped and thus do not influence the final estimator (\ref{eq:estimator_example1}).  The  number of non-zero elements in $w^\ast_\lambda$ is $\sum_{j=1}^m I (\sigma^2_j< \lambda^{-1} )$. 

Note that when $\lambda = 0$, we have uniform weights $w^\ast_0=(1,\dots,1)^{T}$ and the corresponding MCLE is the usual optimal meta-analysis solution. Although the implied estimator $\thetahat(w^\ast_0)$  has minimum variance, it offers no control for the overall computational cost since all $m$ sub-scores are selected. On the other hand, choosing judiciously $\lambda>0$ may lead to low computational burden with negligible loss for the resulting estimator. For instance, assuming $\sigma^2_j=j^2$, for $\theta \in \Theta_n$, a straightforward calculation shows 
\begin{align} 
E \left[ u(\theta,w^\ast_\lambda)- u(\theta, w^\ast_0) \right]^2
\leq  \lambda^2  \sum_{j \in \Ecal} j^2    +  \sum_{j \notin \Ecal} {j^{-2}}       + o(1),
\end{align}
Since the number of the non-zero scores $\sum_{j=1}^m I\left( j^2< \lambda^{-1}\right) =  \lfloor \lambda^{-\frac{1}{2}} \rfloor$, the first term the mean squared difference between $u(\theta,w^\ast_\lambda)$ and the optimal score $u(\theta, w^\ast_0)$ 
is bounded by $\lambda^2\lambda^{-1}\lambda^{-\frac{1}{2}} =\lambda^{\frac{1}{2}}$, up to a vanishing term. Thus, if $\lambda = o(1)$, the composite score  $u(\theta,w^\ast_\lambda)$  converges to the optimal composite score $u(\theta,w^\ast_0)$. Particularly, if $\lambda$ decreases  at a sufficiently slow rate, the truncated score $u(\theta,w^\ast_\lambda)$ can still contain a relatively small number of terms, while the correspondent estimator $\thetahat(w^\ast_\lambda)$ is approximately equal the optimal estimator $\thetahat(w^\ast_0)$ in terms of statistical accuracy.    

If the elements of  $X$ are correlated ($\sigma_{jk}\neq 0$ for $j\neq k$), the partial scores contain overlapping information on $\theta$. In this case, tossing away some highly correlated partial scores improves computing while maintaining satisfactory statistical efficiency for the final estimator.  Figure \ref{fig:anaexample1} shows the solution path of $w^\ast_\lambda$ and the asymptotic relative efficiency of the  corresponding SCLE $\thetahat(w^\ast_\lambda)$ compared to MLE for different values of $\lambda$. When $m$ is large (e.g. $m=1000$), the asymptotic relative efficiency drops gradually until a few scores are left.  This example illustrates that a relatively high efficiency can be achieved by our truncated CL equations, when a few partial scores  already contains the majority of information about $\theta$. In such cases, the final SCLE with a sparse composition rule is expected to achieve a good trade-off between computational cost and statistical efficiency.

\begin{figure}[htp]
	\centering
	\quad\quad\quad\quad$\rho=0$, $m=20$ \quad\quad\quad\quad\quad\quad $\rho=0.5$, $m=20$ \quad\quad\quad\quad\quad\quad $\rho=0.5$, $m=1000$\\
	\quad\;\; Number of sub-likelihoods \quad\; Number of sub-likelihoods \quad Number of sub-likelihoods
	\includegraphics[scale=0.86]{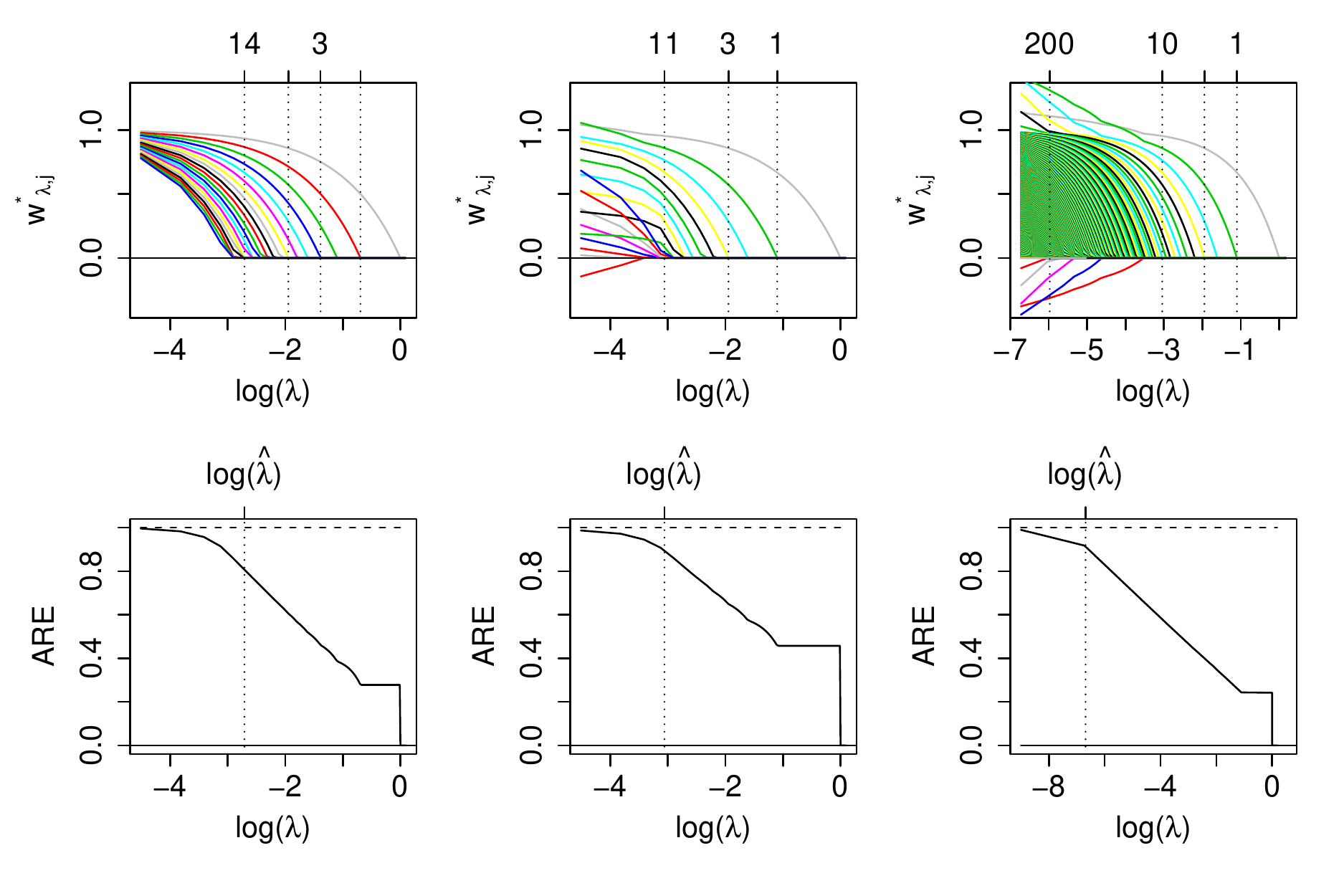}
	\caption{Top Row: Solution paths for the minimizer $w^\ast_\lambda$ of Criterion $Q(\theta^\ast,w)$ in (\ref{eq:criterion}) for different values of $\lambda$ with corresponding number of sub-likelihoods. Bottom Row: Asymptotic relative efficiency (ARE) of the SCLE $\thetahat(w^\ast_\lambda)$ compared to MLE.  The vertical dashed lines on the bottom represent  $\hat\lambda$ selected by Criterion (\ref{eq:lambda}) with $\tau=0.9$. Results correspond to the common location model $X\sim N_m(\theta^\ast 1_m, \Sigma)$ with $j$th diagonal element of $\Sigma$ equal to $j$, and $(jk)$th off-diagonal element of $\Sigma$ equal to $\rho\sqrt{jk}$.   }
	\label{fig:anaexample1}
\end{figure}

\subsection{Location estimation in exchangeable normal variates} \label{sec:anexample2}

In our second example we consider exchangeable variables with $X \sim N_m(\theta 1_m, \Sigma)$ with ${\Sigma =  
	(1 -  \rho) I_m + \rho 1_m 1^T_m }$, $0<\rho<1$. The marginal scores  $u_j(\theta)=X_j-\theta$ are identically distributed and exchangeable with equal correlation. Differently from Example \ref{sec:anaexample1}, the solution $w^\ast_{\lambda}$ to Criterion (\ref{eq:criterion}) has equal elements
$$
w^\ast_{\lambda,j}= \dfrac{1-\lambda}{\rho(m -1) + 1} I(\lambda<1), \ \ 1 \le j \le m,
$$
so the optimal parameter estimator is $ \thetahat(w^\ast_{\lambda})= \sum_{j=1}^m \overline{X}_j/m$  regardless of the value of $\lambda$.  The first eigenvalue of $ES(\theta)$ is $\theta(m-1)+1$, whilst the remaining $m-1$ eigenvalues are all equal to $1-\theta$, suggesting that the first score contains a relatively large information on $\theta$ compared to the other scores.  When $m$ is much larger than $n$, we have 
$
var \{\thetahat(w^\ast_{0}) \} = [\rho^2(m-1)+1]/(m n) \asymp  \rho^2/n
$. The trade-off between statistical and computational efficiency may be measured by the ratio of estimator's variance with  $m=  \infty$ compared to that with  $m<\infty$. This ratio is $t(m) =
\rho^2 m / \{\rho^2(m-1)+1\}$, 
which increases quickly for smaller $m$ and much slower for larger $m$ (e.g., $t(5) = 0.83$, $t(9) = 0.90$ and $t(50) = 0.98 $, if $\rho=0.75$). Thus, although all the elements in $w^\ast_\lambda$ are nonzero, a few partial scores contain already the majority of the information on $\theta$. This suggests that in practice taking a sufficiently large value for $\lambda$,  so that the sparse empirical solution $\what_{\lambda}$ contains only a few of zero elements, already ensures a relatively high statistical efficiency for the corresponding MLCE $\thetahat(\what_\lambda)$.

\subsection{Exponentially decaying covariances}\label{sec:anaexample3}

Let $X \sim N_d(0, \Sigma(\theta))$, where the $jk$th element of $\Sigma(\theta)$ is $\sigma_{jk}(\theta) = \exp\{-\theta d(j,k)\}$. The quantity  $d(j,k)$ may be regarded as the distance between spatial locations $j$ and $k$. Evaluating the ML score in this example is computationally expensive when $d$ is large, since it requires computing the inverse  of $\Sigma(\theta)$, a task involving $O(d^3)$ operations. On the other hand, the CL score is obtained by inverting  $2\times2$ covariance matrices, thus requiring at most $O(d^2)$ operations. Given i.i.d. observations   $X^{(1)},\dots,X^{(n)}$ on $X$, the MCLE $\hat \theta(w)$ solves the  equation
\begin{align*}  
0 = & \sum_{j<k} w_{jk}   \sum_{i=1}^n   u_{jk}(\theta, X_j^{(i)},X_k^{(i)})    \\  
=& \sum_{j<k} w_{jk}   \sum_{i=1}^n   \left[    \dfrac{\sigma_{jk}(\theta)\{ {X^{(i)}_j}^2+{X^{(i)}_{k}}^2-2 X^{(i)}_jX_k^{(i)}\sigma_{jk}(\theta)\}}{\{1-\sigma_{jk}(\theta)^2\}^2} \right] \sigma_{jk}(\theta)d(j,k) \notag\\
&- \sum_{j<k} w_{jk}   \sum_{i=1}^n   \left[\frac{\sigma_{jk}(\theta)+X^{(i)}_jX_k^{(i)}}{ 1-\sigma_{jk}(\theta)^2} 
\right]  \sigma_{jk}(\theta)d(j,k) , \notag
\end{align*}
where $u_{jk}$ corresponds to the score of a bivariate normal distribution for the pair $(X_j, X_{k})$.  

Figure \ref{fig:anaexample3} shows the analytical solution path of the minimizer $w^\ast_\lambda$ of Criterion (\ref{eq:criterion}) for different values of $\lambda$, and the asymptotic relative efficiency of the SCLE $\hat{\theta}(w^\ast_\lambda)$ compared to MLE.  We consider a number of pairs ranging from  $m=45$ to $m=1225$  for various choices of $\theta$. When $\lambda=0$, the SCLE has relatively high asymptotic efficiency.  Interestingly, efficiency remains steady around $90\%$ until only a  few sub-likelihoods are left. This suggests again that a very small proportion of partial-likelihood components contains already the majority of the information about $\theta$. In such cases, the SCLE reduces dramatically the computing burden while retaining satisfactory efficiency for the final estimator.

\begin{figure}[htp]
	\centering
	\quad\quad$\theta=0.4$, $d=10$, $m=45$   \quad\quad\quad $\theta=0.6$, $d=10$, $m=45$ \quad $\theta=0.6$, $d=50$, $m=1225$\\
	\quad\;\; Number of sub-likelihoods \quad\; Number of sub-likelihoods \quad Number of sub-likelihoods
	\includegraphics[scale=0.86]{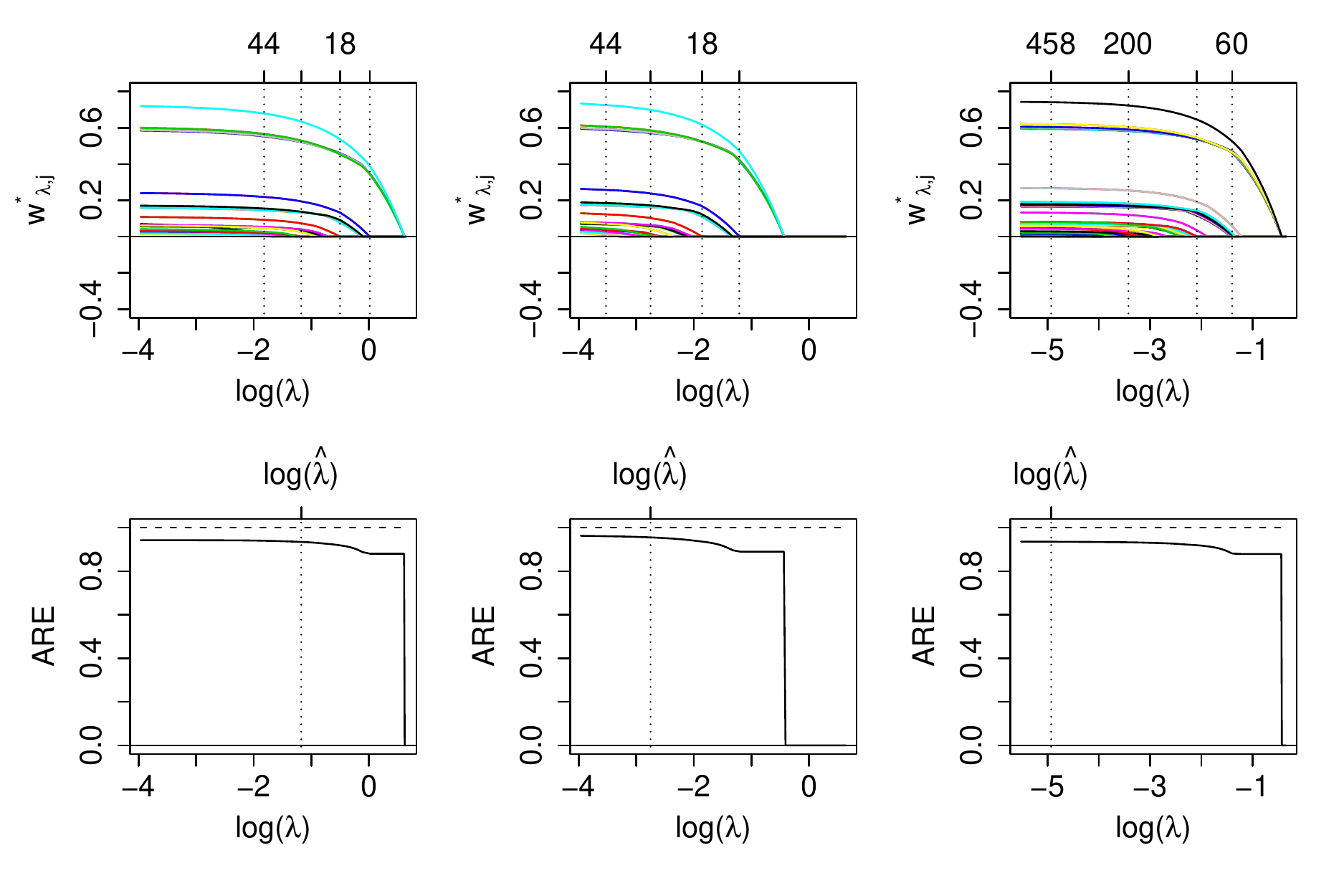}
	\caption{Top Row: Solution paths for the minimizer $w^\ast_\lambda$ of Criterion $Q(\theta^\ast,w)$ defined in (\ref{eq:criterion}) for different values of $\lambda$ with corresponding number of sub-likelihoods reported. Bottom Row: Asymptotic relative efficiency (ARE) of SCLE $\thetahat(w^\ast_\lambda)$ compared to MLE.   The vertical dashed lines on the bottom row correspond to $\hat\lambda$ selected by Criterion (\ref{eq:lambda}) with $\tau=0.9$. Results correspond to the model $X\sim N_d(0, \Sigma(\theta))$ with $(j,k)$th  element of $\Sigma$ equal to $\exp\{ -\theta \sqrt{2|j-k|} \}$. }
	\label{fig:anaexample3}
\end{figure}

\section{Numerical examples} \label{sec:NumExample} In this section, we study the finite-sample performance of the SCLE in terms by assessing its mean squared error and computing cost when the data dimension $d$ increases. As a preliminary estimator, we use the MCLE $\hat \theta(w)$ with  $w =(1,\dots, 1)^T$, which is perhaps the most common choice for $w$ in CL applications  \citep{varin2011overview}.

\subsection{Example 1} We generate samples of size $50$ from  $X\sim N_m(\theta1_m, \Sigma_l)$, $l=1,\dots, 4$. We specify the following covariance structures:  $\Sigma_1=I_m$; $\Sigma_2$ is diagonal with $k$th diagonal elements $\sigma_k^2=k$; $\Sigma_3$ has unit diagonal elements with the first 10 elements of $X$ uncorrelated with any other element while the other elements in $X$ have  pairwise correlations $0.8^{|j-k|}$ ($10 < j <k<d$);  $\Sigma_4$ has unit diagonal elements and  a block diagonal structure with  independent blocks of six elements each and within-block correlation of 0.6.

Figure \ref{fig:example2_1} (left),  shows the relative mean squared error of the SCLE $\thetahat_\lambda$ compared to that of the MLE for a moderate data dimension ($d=m=30$). The points in the trajectories correspond to inclusion of a new sub-likelihood component according to the the least-angle algorithm described in Section \ref{Sec:lambda}. The SCLE $\hat{\theta}_{\lambda}$ achieves more than 90$\%$ efficiency compared to MLE for all the covariance structures considered, always before all the candidate partial likelihoods are included. The advantage of SCLE becomes evident when the sub-likelihood scores exhibit relatively strong correlation. For example, for $\Sigma=\Sigma_4$ where sub-likelihoods  are independent between blocks, the maximum efficiency is achieved when only a few representative partial scores are selected from each block.  

Figure \ref{fig:example2_1} (right) shows the ratio between mean squared error of the SCLE compared and that of the MLE for a relatively large data dimension $(d=m=1000)$ compared to the sample size  ($n=50$). Although here the MLE is used as a theoretical benchmark, in practice such an estimator is not available as $m$ is larger than the sample size $n$. Interestingly, when the sample size $n$ is fixed, including all the sub-likelihoods eventually leads to substantial loss of efficiency.  In this examples, selecting too many sub-likelihoods not only wastes computing resources but also implies estimators with larger errors . On the other hand, a proper choice of the tuning constant $\lambda$ (corresponding to about 20 selected sub-likelihoods) can balance computational and statistical efficiency.

\begin{figure}[h]
	\begin{tabular}{cc}
		\includegraphics[scale=1]{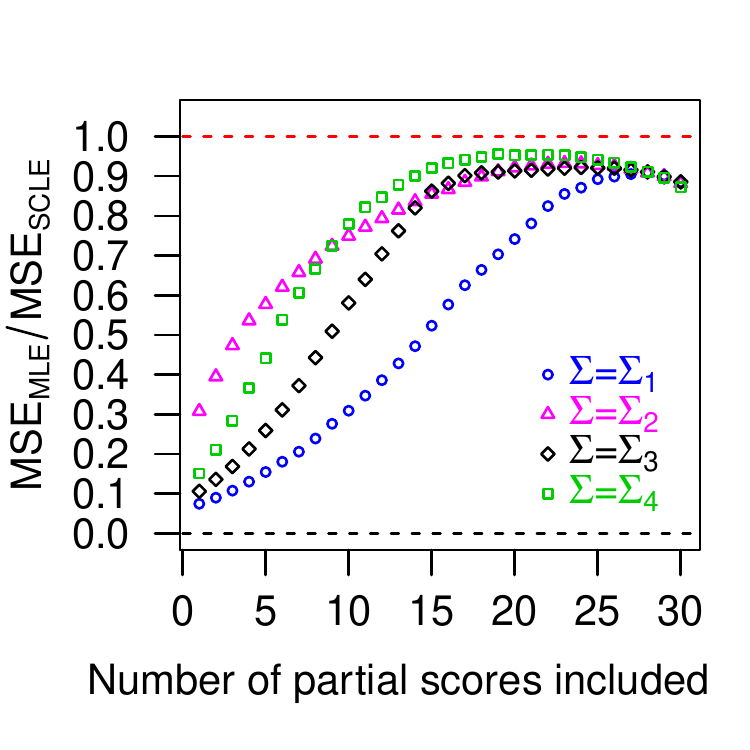} & \includegraphics[scale=1]{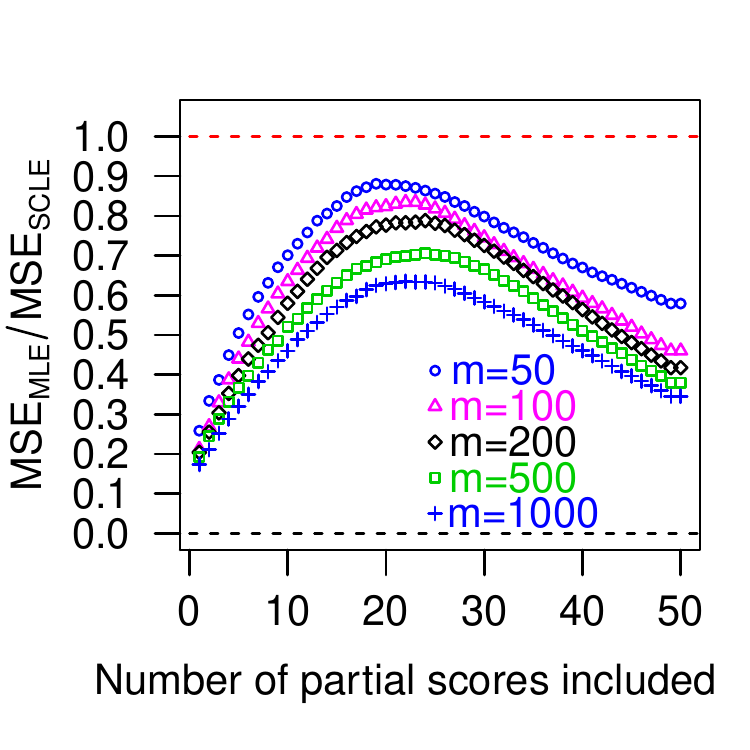} 
	\end{tabular}
	\caption{Monte-Carlo estimate of the mean square error of the MLE ($\text{MSE}_{\text{MLE}}$) divided by that of the SCLE ($\text{MSE}_{\text{SCLE}}$), for the model $X\sim N_m(\theta 1_m, \Sigma)$. Each trajectory is based on 1000 Monte-Carlo samples of size $n=50$. Each point in the trajectories correspond to inclusion of a new sub-likelihood component based on the least-angle algorithm described in Section \ref{Sec:lambda}. Left: Different specifications for $\Sigma$ detailed in Section \ref{sec:NumExample} with $m=30$.  Right: Covariance $\Sigma=\Sigma_2$  with  $m$ ranging from 50 to 1000.  }
	\label{fig:example2_1}
\end{figure}

\subsection{Example 2}

In our second numerical example, we consider  covariance estimation for the model $X\sim   N_d(0, \Sigma(\theta))$ with $\Sigma(\theta)_{j,k}= \exp\{-\theta 2(j-k)^2\}$. Here the covariance between components $X_j$ and $X_{k}$ in the random vector $X$ decreases rapidly as the distance $(j-k)^2$ between  components of $X$ increases. Figure \ref{fig:example2_2} shows  Monte-Carlo estimates for the mean square error of the SCLE $\thetahat(\what_\lambda)$ compared to that of the MCLE  with uniform composition rule (i.e. $\thetahat(w)$ with $w=(1,\dots,1)^T$), for $\theta=0.2$ and $0.4$. Each point in the trajectories correspond to inclusion of a new sub-likelihood component using the least-angle algorithm described in Section \ref{Sec:lambda}. The SCLE is already more efficient than the uniform MCLE when a handful of partial scores are selected. For example if $\theta=0.2$ and $m=1035$, selecting ten sub-likelihoods already ensures 1.5 times the accuracy of the uniform MCLE. Since the uniform MCLE uses all the $m=1035$ pairs of sub-likelihoods, the SCLE obtains more accurate results at a much lower computing cost.

\begin{figure}[h]
	\begin{tabular}{cc}
		\includegraphics[scale=1]{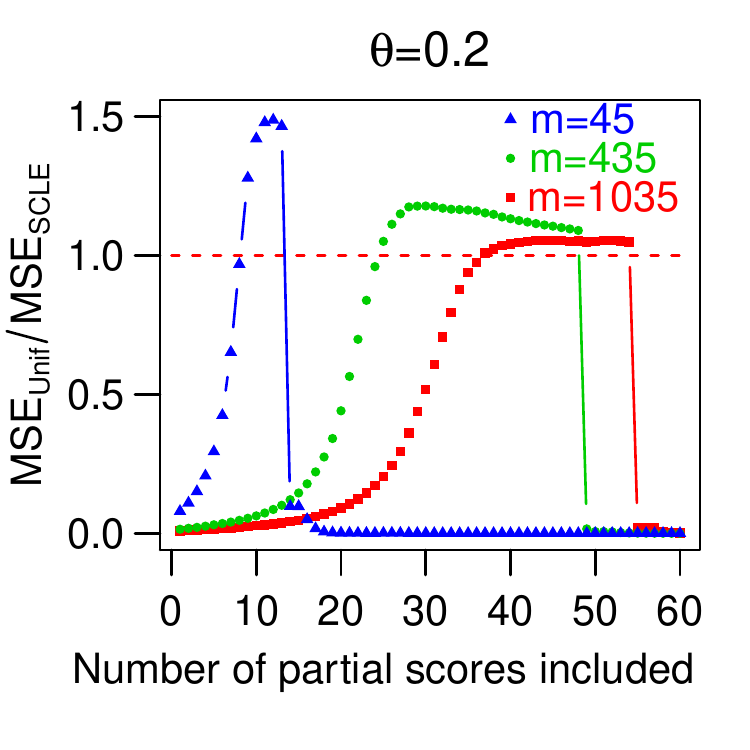} & \includegraphics[scale=1]{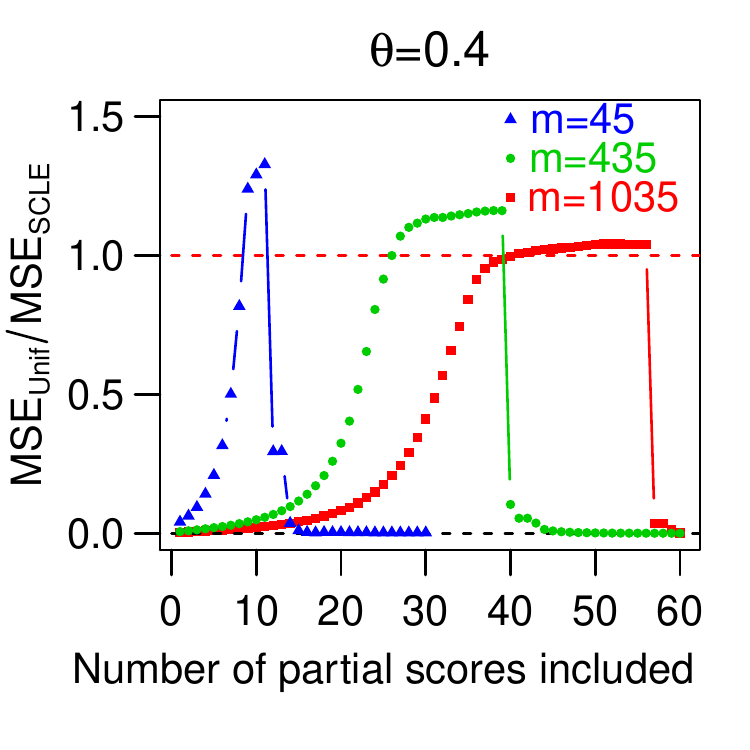} 
	\end{tabular}
	\caption{Monte-Carlo estimate of the mean square error of the MCLE  with $w=(1,\dots, 1)^T$ ($\text{MSE}_\text{Unif}$) divided by that for the SCLE ($\text{MSE}_\text{SCLE}$). Each point in the trajectories corresponds to the inclusion of a new sub-likelihood component based on the least-angle algorithm described in Section \ref{Sec:lambda}. Results are based on 1000 Monte Carlo samples of size $n=50$ from the model $X\sim N_d(0, \Sigma(\theta))$ with $\Sigma(\theta)_{j,k}=\exp\{-2\theta(j-k)^2\}$.  Trajectories correspond to  $\theta=0.2$ (left) and $\theta=0.4$ (right) for different numbers of sub-likelihoods, $m$, ranging from 45 to 1035. }
	\label{fig:example2_2}
\end{figure}

\section{Conclusion and final remarks} \label{sec:finalremarks}

In recent years, inference for complex and large data sets has become one of the most active research areas in statistics. In this context, CL inference has played an important role in applications as a remedy to the drawbacks of traditional likelihood approaches. Despite the popularity of CL methods,  how to address the trade-off between computational parsimony and statistical efficiency in CL inference from a methodological perspective remains a largely unanswered question. Motivated by this gap in the literature, we introduced a new likelihood selection methodology which is able truncate quickly overly complex CL equations potentially encompassing many terms, while attaining relatively low mean squared error for the implied estimator. This is achieved by  selecting CL estimating equations satisfying a $\ell_1$-constraint on the CL complexity while minimizing an approximate $\ell_2$-distance from the full-likelihood score. Inference based on statistical objective functions with $\ell_1$-penalties on the parameter $\theta$ is not new in the statistical literature (e.g., see \cite{giraud2014introduction} for a book-length exposition on this topic). Note, however, that differently from existing approaches the main goal here is to reduce  the computational complexity of the overall CL estimating equations regardless of the model parameter $\theta$, which is viewed as fixed in size. Accordingly, our $\ell_1$-penalty involves only the composition rule $w$, but not the model parameter $\theta$. In the future, developing approaches for simultaneous penalization on $\theta$ and $w$  may be useful to deal with situations where both the data dimension and the size of the parameter space increase.

Two main perks of  the proposed approach make it an effective alternative to traditional CL estimation from  practitioner's perspective. The first advantage is that  the SCLE methodology constructs CL equations and returns inferences very quickly. Theorem \ref{Thm:uniqueroots} shows that for any $\lambda>0$ the empirical composition rule $\whatlambdan$  retains at most $np \wedge m$ non-zero elements. This is an important feature of our method, which reduces -- sometimes dramatically -- the amount of computing needed to obtain the implied MCLE $\thetahat(\whatlambdan)$ and its standard error. Lemma \ref{Lem:KKT} highlights that the non-zero elements in $\whatlambdan$  correspond to partial scores  maximally correlated with the residual difference $r(\theta, w)= u^{ML}(\theta)-u(\theta, w)$. This means that our approach constructs estimators with relatively high efficiency by dropping only those $u_j$s contributing the least in the CL equations for approximating $u^{ML}(\theta)$. The second desirable feature of our method concerns model selection and the ability to reduce the complexity of large data sets. In essence, the truncation step (T-Step) described in  (\ref{eq:LAstep}) is a dimension-reduction step: starting from observations on a possibly large the $d$-dimensional vector $X$, our method generates a collection of lower-dimensional subsets $\mathcal{S}_\lambda = \{S_j, j \in   \Ecalhat_\lambda, \lambda >  0 \}$  where $\Ecalhat_\lambda = \{ j: \hat w_{\lambda,j} \neq 0 \}$. While individually the selected data subsets in  $\mathcal{S}_\lambda$ are of size much smaller than $d$, collectively they contain most of the information on $\theta$ for a given level of computing represented by $\lambda$. 

From a theoretical perspective,  little work has been devoted to study the properties of CL estimators when the number of sub-likelihoods $m$ diverges. \cite{cox2004note} discuss estimators based CL equations with $m= {{d}\choose{2}} + d$ terms by taking all pairwise and marginal scores for the $d$-dimensional vector $X$. They take non-sparse and more rigid composition rules compared to ours with $w_{jk}=1$ for all pairs ($j\neq k$) and $w_{jj}= -a \times d$ for all marginals ($j=k$), where $a$ is a tuning constant used to increase efficiency. To our knowledge, the current paper is the first studying the behavior of more flexible sparsity-inducing composition rules and implied CL estimating equations in the setting where both $m$ and $n$ grow. 

Theorem \ref{Thm:unifweights} and Corollary \ref{Col:covergeinu} provide us with guidance on when the selected score $u(\theta, \hat w_\lambda)$  is  a meaningful approximation to the unknown ML score in the sense of the objective (\ref{eq:criterion}). A first requirement is that the total information on $\theta$ available if the full likelihood were actually known, $m^\ast= \Vert u^{ML}(\theta^\ast) \Vert^2_2$, is not  overwhelming compared to the sample size $n$. If $X \sim N_m(\theta 1_m, \Sigma)$, we require $m^\ast/n  = \text{tr}\{\Sigma^{-1}\}/n \to 0$. This condition is very mild when relatively few elements  $X$ contain a strong signal on $\theta$, whilst the remaining elements  are noisy and with heterogeneous variances. In Section \ref{sec:anaexample1}, we illustrate this by taking $\Sigma$  diagonal with increasing diagonal elements. A second requirement is that the tuning constant $\lambda$ dominates asymptotically $m^\ast \sqrt{r_1}$, where $\sqrt{r_1}$ represents the convergence rate of the empirical covariance of scores $\efn\{S(\theta)\}$. For instance, if the elements of $S(\theta)$ are sub-Gaussian, we have $r_1 =o_p( \log(m)/n)$, meaning that  $\lambda$ should be asymptotically larger than  $m^\ast \sqrt{\log(m)/n}$. 

Finally, we show that statistical optimality and computationally parsimony can co-exhist within the same selection procedure when $\lambda$ is judiciously selected. If $\lambda \to 0$ at the rate described in Theorem \ref{Thm:unifweights}, the truncated composition rule $\hat w_\lambda$ with $|\Ecalhat_\lambda|$ scores approximates the optimal composition rule $w^\ast_0$ consisting of $m$ nonzero terms. Accordingly, Corollary \ref{Col:covergeinu} suggests that the implied truncated CL score function $u(\theta, \hat w_\lambda)$ approximates the optimal score $u(\theta, \hat w_\lambda)$, uniformly on a neighborhood of $\theta^\ast$. Extending this type of result and developing further theoretical insight on the interplay between the type of penalty and the MCLE accuracy beyond the current i.i.d. setting would represent another exciting future research direction. For example, findings would be  particularly valuable in spatial statistics where often the  number of sub-likelihood components is overwhelming and poses serious challenges to traditional CL methods.

\section*{Appendix} In this section, we show technical lemmas required by the main results in Section \ref{sec:properties}.
\addtocounter{section}{1}

\begin{lemma}\label{Lem:decreasingw}
	$\| \what_\lambda \|_1$ and $\|w^\ast_\lambda\|_1$ are decreasing in $\lambda$.
\end{lemma}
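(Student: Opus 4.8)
The plan is to apply the standard comparison-of-optima argument for $\ell_1$-penalized convex objectives, using that in both the population and the empirical problem the term multiplied by $\lambda$ is the same convex function $\|w\|_1$ while the remaining part does not depend on $\lambda$. First I would record that, by (\ref{eq:crit2}) together with the identity $E[u^{ML}(\theta^\ast)u_j(\theta^\ast)^T]=E[u_j(\theta^\ast)u_j(\theta^\ast)^T]$, the population criterion can be written, up to an additive term independent of $w$, as $Q_\lambda(\theta^\ast,w)=g(w)+\lambda\|w\|_1$ with $g(w)=\tfrac12 w^T ES(\theta^\ast)w - E\{\text{diag}(S(\theta^\ast))\}^T w$, and similarly that $\hat Q_\lambda(\thetahat,w)=\hat g(w)+\lambda\|w\|_1$ with $\hat g(w)=\tfrac12 w^T\efn S(\thetahat)w - \efn\{\text{diag}(S(\thetahat))\}^T w$; crucially $g$ and $\hat g$ do not depend on $\lambda$ (for $\hat g$, once the preliminary estimator $\thetahat$ is held fixed). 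By Theorem \ref{Thm:uniqueroots} and its population analogue, the minimizers $\what_\lambda$ and $w^\ast_\lambda$ exist and are unique for every $\lambda>0$.

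The core of the argument is then a two-line computation. Fix $0<\lambda_1<\lambda_2$ and write $w_i=w^\ast_{\lambda_i}$, $i=1,2$. Optimality of $w_1$ for $Q_{\lambda_1}(\theta^\ast,\cdot)$ and of $w_2$ for $Q_{\lambda_2}(\theta^\ast,\cdot)$ gives
\begin{align*}
g(w_1)+\lambda_1\|w_1\|_1 &\le g(w_2)+\lambda_1\|w_2\|_1,\\
g(w_2)+\lambda_2\|w_2\|_1 &\le g(w_1)+\lambda_2\|w_1\|_1.
\end{align*}
Summing the two inequalities cancels the $g$-terms and leaves $(\lambda_1-\lambda_2)\|w_1\|_1\le(\lambda_1-\lambda_2)\|w_2\|_1$; dividing by $\lambda_1-\lambda_2<0$ reverses the sign and yields $\|w^\ast_{\lambda_1}\|_1\ge\|w^\ast_{\lambda_2}\|_1$. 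Carrying out the identical computation with $Q$, $g$, $w^\ast$ replaced by $\hat Q$, $\hat g$, $\what$ gives $\|\what_{\lambda_1}\|_1\ge\|\what_{\lambda_2}\|_1$, which completes the proof.

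I do not expect a real obstacle here: the whole proof rests on the fact that the quadratic (non-penalty) part of each objective is common to all values of $\lambda$, so the two optimality inequalities add with no residual term, and the strict convexity from Theorem \ref{Thm:uniqueroots} is invoked only to make ``the'' minimizer well defined, not in the inequality itself. Two minor points I would flag in the write-up: the monotonicity is non-strict, and $\|w_\lambda\|_1$ can be constant over a range of $\lambda$ -- e.g. it vanishes identically for all sufficiently large $\lambda$, as the closed-form solution in Section \ref{sec:anaexample1} illustrates; and if one also wishes to cover $\lambda=0$ in the empirical statement, $\what_0$ may fail to be unique when $n<m$, but $\|\what_0\|_1\ge\|\what_\lambda\|_1$ still holds for every choice of minimizer $\what_0$ by exactly the same computation.
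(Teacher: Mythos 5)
Your proof is correct and is essentially identical to the paper's: both fix $\lambda_1\neq\lambda_2$, write the two optimality inequalities for the common (non-penalized) part of the objective plus $\lambda\|\cdot\|_1$, and add/subtract them to cancel that common part and compare $\|w_{\lambda_1}\|_1$ with $\|w_{\lambda_2}\|_1$. Your added remarks on non-strictness and the $\lambda=0$ case are sensible but not needed for the statement as given.
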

\begin{proof}  
	Denote the first term of Criterion $Q_\lambda(\theta,w)$ defined in (\ref{eq:criterion}) (without the penalty term) by $Q_1(\theta,w)$. Suppose $\lambda_1>\lambda_2$, and let $w_1$, $w_2$ be the minimizers of $Q_{\lambda_1}(\theta^\ast,w)$, $Q_{\lambda_2}(\theta^\ast,w)$ respectively. Then,
	$ Q_1(w_1)+\lambda_1\|w_1\|_1 \leq Q_1(w_2)+\lambda_1\|w_2\|_1$ and $Q_1(w_1)+\lambda_2\|w_1\|_1 \geq Q_1(w_2)+\lambda_2\|w_2\|_1$. 
	Subtracting the last two inequalities gives $(\lambda_1-\lambda_2)\| w_1\|_1 \leq (\lambda_1-\lambda_2)\| w_2\|_1 $. Since $\lambda_1>\lambda_2$, we have $\| w_1\|_1 \leq\| w_2\|_1$. An analogous argument shows that $\| \what_\lambda \|_1$ is decreasing.\end{proof}

\begin{lemma}\label{Lem:wnorm}
	Under Conditions \emph{C.1} - \emph{C.3}, if $r_1/\lambda = o_p(1)$, then $\| w^\ast_\lambda \|_1=O({m^\ast}/\lambda)$ and $\| \what_\lambda \|_1=O_p({m^\ast}/\lambda)$.
\end{lemma}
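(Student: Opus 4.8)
The plan is to bound $\|w^\ast_\lambda\|_1$ by comparing the value of the population criterion $Q_\lambda(\theta^\ast, w^\ast_\lambda)$ against its value at a convenient reference point, and then to transfer the bound to $\what_\lambda$ by controlling the discrepancy $r_1$ between $\efn S(\thetahat)$ and $ES(\theta^\ast)$. First I would observe that $w^\ast_\lambda$ minimizes $Q_\lambda(\theta^\ast, \cdot)$, so in particular $Q_\lambda(\theta^\ast, w^\ast_\lambda) \le Q_\lambda(\theta^\ast, 0) = \tfrac12 m^\ast$, using that the first term of the criterion at $w = 0$ equals $\tfrac12 E\|u^{ML}(\theta^\ast)\|_2^2 = \tfrac12 m^\ast$. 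Since the quadratic part of $Q_\lambda$ is nonnegative (it is $\tfrac12 E\|u^{ML}(\theta^\ast) - u(\theta^\ast, w)\|_2^2 \ge 0$), this forces $\lambda \|w^\ast_\lambda\|_1 \le \tfrac12 m^\ast$, i.e. $\|w^\ast_\lambda\|_1 \le m^\ast/(2\lambda) = O(m^\ast/\lambda)$.

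For the empirical version the idea is identical but carried out on $\hat Q_\lambda(\thetahat, \cdot)$. I would write $\hat Q_\lambda(\thetahat, w) = \tfrac12 w^T \efn\{S(\thetahat)\} w - \efn\{\mathrm{diag}(S(\thetahat))\}^T w + \lambda \|w\|_1$ as displayed in the text, and compare its value at $\what_\lambda$ with its value at $w = 0$, which gives $\hat Q_\lambda(\thetahat, 0) = 0$. Hence
\begin{equation*}
\tfrac12 \what_\lambda^T \efn\{S(\thetahat)\}\what_\lambda - \efn\{\mathrm{diag}(S(\thetahat))\}^T \what_\lambda + \lambda \|\what_\lambda\|_1 \le 0.
\end{equation*}
The quadratic form $\what_\lambda^T \efn\{S(\thetahat)\}\what_\lambda = \efn\|U(\thetahat)^T\what_\lambda\|_2^2 \ge 0$ is nonnegative, so $\lambda\|\what_\lambda\|_1 \le \efn\{\mathrm{diag}(S(\thetahat))\}^T\what_\lambda \le \|\efn\{\mathrm{diag}(S(\thetahat))\}\|_\infty \|\what_\lambda\|_1$. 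The remaining task is to show $\|\efn\{\mathrm{diag}(S(\thetahat))\}\|_\infty = \max_j \efn\{u_j(\thetahat)^T u_j(\thetahat)\}$ is $O_p$ of something like $m^\ast$ — but this is too crude, because a single diagonal entry is $O_p(1)$ by Condition C.1, not $O_p(m^\ast)$. So instead I would use the sharper route: from $\lambda\|\what_\lambda\|_1 \le \efn\{\mathrm{diag}(S(\thetahat))\}^T\what_\lambda$ together with the KKTC for $\what_\lambda$, namely $\efn\{\mathrm{diag}(S(\thetahat))\} - \efn\{S(\thetahat)\}\what_\lambda = \lambda\hat\gamma$, I can write $\efn\{\mathrm{diag}(S(\thetahat))\}^T\what_\lambda = \what_\lambda^T\efn\{S(\thetahat)\}\what_\lambda + \lambda\hat\gamma^T\what_\lambda = \efn\|U(\thetahat)^T\what_\lambda\|_2^2 + \lambda\|\what_\lambda\|_1$; substituting back shows $\efn\|U(\thetahat)^T\what_\lambda\|_2^2 \le \efn\{\mathrm{diag}(S(\thetahat))\}^T\what_\lambda - \lambda\|\what_\lambda\|_1$. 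A cleaner bound comes from comparing directly with the population minimizer: $\hat Q_\lambda(\thetahat, \what_\lambda) \le \hat Q_\lambda(\thetahat, w^\ast_\lambda)$, and then bounding the right side using $\|w^\ast_\lambda\|_1 = O(m^\ast/\lambda)$ from the first part, the trace bound $\mathrm{tr}\{\efn S(\thetahat)\} = O_p(m)$ or more precisely the approximation $\efn S(\thetahat) \approx ES(\theta^\ast)$ with error $r_1$, and the identity $w^{\ast T}_\lambda ES(\theta^\ast) w^\ast_\lambda = E\|u(\theta^\ast, w^\ast_\lambda)\|_2^2 \le 2(E\|u^{ML}\|_2^2 + E\|u^{ML} - u(\theta^\ast,w^\ast_\lambda)\|_2^2) = O(m^\ast)$, using again that $w^\ast_\lambda$ makes the $\ell_2$-distance small.

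Putting the pieces together: $\hat Q_\lambda(\thetahat, w^\ast_\lambda) = \tfrac12 w^{\ast T}_\lambda \efn\{S(\thetahat)\} w^\ast_\lambda - \efn\{\mathrm{diag}(S(\thetahat))\}^T w^\ast_\lambda + \lambda\|w^\ast_\lambda\|_1$, and each term is $O_p(m^\ast)$: the quadratic term is $w^{\ast T}_\lambda ES(\theta^\ast) w^\ast_\lambda + O_p(r_1 \|w^\ast_\lambda\|_1^2) = O(m^\ast) + O_p(r_1 m^{\ast 2}/\lambda^2) = O(m^\ast)$ under the hypothesis $r_1 m^{\ast 2}\lambda^{-2} = o_p(1)$ (noting $r_1/\lambda = o_p(1)$ is weaker and also suffices for the crudest estimates), the linear term is bounded by $\max_j|\efn u_j(\thetahat)^T u_j(\thetahat)|\cdot\|w^\ast_\lambda\|_1 = O_p(1)\cdot O(m^\ast/\lambda)$, and $\lambda\|w^\ast_\lambda\|_1 = O(m^\ast)$. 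Then $\lambda\|\what_\lambda\|_1 \le \hat Q_\lambda(\thetahat, \what_\lambda) + \tfrac12\efn\|U(\thetahat)^T\what_\lambda\|_2^2 \le \hat Q_\lambda(\thetahat, w^\ast_\lambda) + O_p(\cdots)$, wait — more carefully, since $\hat Q_\lambda(\thetahat, \what_\lambda) \ge \lambda\|\what_\lambda\|_1 - \efn\{\mathrm{diag}(S(\thetahat))\}^T\what_\lambda \ge \lambda\|\what_\lambda\|_1 - O_p(1)\|\what_\lambda\|_1$ only helps if $\lambda$ dominates $O_p(1)$, which fails as $\lambda \to 0$. The correct final step is: from $\tfrac12\what_\lambda^T\efn\{S(\thetahat)\}\what_\lambda - \efn\{\mathrm{diag}(S(\thetahat))\}^T\what_\lambda + \lambda\|\what_\lambda\|_1 \le \hat Q_\lambda(\thetahat, w^\ast_\lambda) = O_p(m^\ast)$ and the KKTC identity above which gives $\efn\{\mathrm{diag}(S(\thetahat))\}^T\what_\lambda = \efn\|U(\thetahat)^T\what_\lambda\|_2^2 + \lambda\|\what_\lambda\|_1$, we get $-\tfrac12\efn\|U(\thetahat)^T\what_\lambda\|_2^2 \le O_p(m^\ast)$, which is vacuous; so one instead uses the complementary comparison $\hat Q_\lambda(\thetahat, \what_\lambda) \le \hat Q_\lambda(\thetahat, 0) = 0$ to get $\efn\|U(\thetahat)^T\what_\lambda\|_2^2 \le \efn\{\mathrm{diag}(S(\thetahat))\}^T\what_\lambda$, i.e. (via KKTC) $\lambda\|\what_\lambda\|_1 \le \efn\{\mathrm{diag}(S(\thetahat))\}^T\what_\lambda - \efn\|U(\thetahat)^T\what_\lambda\|_2^2 \le \tfrac14\efn\{\mathrm{diag}(S(\thetahat))\}^T (\efn\{S(\thetahat)\})^{-1}\efn\{\mathrm{diag}(S(\thetahat))\}$ on the active set, and then bound this by $O_p(m^\ast)$ using Condition C.1 for the numerator diagonal entries, the cardinality bound $|\hat\Ecal| \le np\wedge m$ from Theorem \ref{Thm:uniqueroots}, and Condition C.4 (or a Rayleigh-quotient argument) for the inverse. \textbf{The main obstacle} is precisely this last estimate: controlling $\efn\{\mathrm{diag}(S(\thetahat))\}^T\what_\lambda$ by $O_p(m^\ast/\lambda)\cdot\lambda$ rather than by the naive $\|\efn\{\mathrm{diag}\}\|_\infty\|\what_\lambda\|_1$ (which gives a circular bound) — the resolution is to feed in the comparison with $w^\ast_\lambda$ and the $O(m^\ast)$ bound on $w^{\ast T}_\lambda ES(\theta^\ast) w^\ast_\lambda$ established in the first part, so that $\what_\lambda$ cannot have much larger $\ell_1$-norm than $w^\ast_\lambda$ without inflating the quadratic term beyond what optimality permits.
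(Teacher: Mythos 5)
Your bound for $\|w^\ast_\lambda\|_1$ is exactly the paper's: compare $Q_\lambda(\theta^\ast,w^\ast_\lambda)$ with $Q_\lambda(\theta^\ast,0)=m^\ast/2$ and discard the nonnegative quadratic term. Your treatment of $\|\what_\lambda\|_1$, however, has a genuine gap. You correctly reach $\lambda\|\what_\lambda\|_1 \le \efn\{\text{diag}(S(\thetahat))\}^T\what_\lambda$ and correctly diagnose that the naive sup-norm bound on the right is circular, but the resolution you finally settle on --- completing the square to get $\tfrac14\,\efn\{\text{diag}\}^T(\efn S_{\Ecalhat})^{-1}\efn\{\text{diag}\}$ on the active set and controlling it via the cardinality bound and Condition C.4 --- does not close the argument: C.4 is not among the lemma's hypotheses (only C.1--C.3 are assumed, and this lemma feeds into the results that later invoke C.4), and even granting C.4 the resulting bound is of order $|\Ecalhat|/c_n$ with $|\Ecalhat|$ possibly as large as $np\wedge m$, which has no reason to be $O_p(m^\ast)$. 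You also lean at one point on $r_1 m^{\ast 2}\lambda^{-2}=o_p(1)$, which is likewise not assumed here.

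The missing idea is to center the empirical diagonal at its population value and then use the second Bartlett identity. Write $\efn\{\text{diag}(S(\thetahat))\}^T\what_\lambda = \text{diag}\{\efn S(\thetahat)-ES(\theta^\ast)\}^T\what_\lambda + \text{diag}\{ES(\theta^\ast)\}^T\what_\lambda$. The first term is at most $r_1\|\what_\lambda\|_1$ and gets absorbed into the left-hand side, since $r_1/\lambda=o_p(1)$ yields $(\lambda-r_1)\|\what_\lambda\|_1\ge \lambda\{1-o_p(1)\}\|\what_\lambda\|_1$; this is precisely where the hypothesis $r_1/\lambda=o_p(1)$ enters, and your write-up never actually uses it. For the second term, the unbiasedness identity $E\{S(\theta^\ast)\}_{jj}=E\|u_j(\theta^\ast)\|_2^2 = E\,u^{ML}(\theta^\ast)^Tu_j(\theta^\ast)$ gives $\text{diag}\{ES(\theta^\ast)\}^T\what_\lambda = E\,u^{ML}(\theta^\ast)^Tu(\theta^\ast,\what_\lambda)$, which by Cauchy--Schwarz is bounded by $\sqrt{m^\ast}\,\{E\|u(\theta^\ast,\what_\lambda)\|_2^2\}^{1/2}=O_p(m^\ast)$, a quadratic-inequality step of the same kind as in Lemma \ref{Lem:varuorder}. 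Combining the two pieces gives $\lambda\{1-o_p(1)\}\|\what_\lambda\|_1=O_p(m^\ast)$, hence $\|\what_\lambda\|_1=O_p(m^\ast/\lambda)$, with no appeal to C.4, to the active-set cardinality, or to any rate stronger than $r_1/\lambda=o_p(1)$.
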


\begin{proof} 
	For $w^\ast_\lambda$,  note that $Q_\lambda(\theta^\ast,w^\ast_\lambda)= E\| u^{ML}(\theta^\ast) - u(\theta^\ast, w^\ast_\lambda) \|_2^2/2 + \lambda\| w^\ast_\lambda \|_1\leq Q_\lambda(\theta^\ast, 0) = {m^\ast}/2$, or   $\lambda \| w^\ast_\lambda\|_1 \leq  {m^\ast}/2$. Hence, $\| w^\ast_\lambda\|_1 = O({m^\ast}/\lambda)$. For $\what_\lambda$,  we have  $\hat Q(\thetahat,\what_\lambda)=  \what_\lambda^T \efn S(\thetahat) \what_\lambda/2 - \text{diag}(\efn S(\thetahat))^T \what_\lambda +\lambda \| \what_\lambda \|_1 \leq \hat Q(\thetahat,0) =0$.  Since $Eu^{ML}(\theta^\ast)^T u^{j}(\theta^\ast)=Eu_j(\theta^\ast)^Tu_j(\theta^\ast)$, we have
	\begin{align*}
	\lambda \| \what_\lambda \|_1   \leq  \text{diag}(\efn S(\thetahat))^T \what_\lambda 
	 =\text{diag}( \efn S(\thetahat) -E S(\theta^\ast) )^T \what_\lambda + E u^{ML}(\theta^\ast)^T u(\theta^\ast,\what_\lambda),
	\end{align*}
	with $\text{diag}( \efn S(\thetahat) -E S(\theta^\ast) )^T \what_\lambda  \leq \max_{j,k} |\efn S(\thetahat) -E S(\theta^\ast) )^T|_{j,k}  \| \what_\lambda \|_1     \leq r_1 \| \what_\lambda \|_1$ and 
	$$ 
	E u^{ML}(\theta^\ast)^T u(\theta^\ast,\what_\lambda)=O_p({m^\ast})
	$$
	Hence, $\| \what_\lambda  \|_1=O_p({m^\ast}/\lambda)$. \end{proof}

\begin{lemma} \label{Lem:covergeinuthetahat}  Let $\lambda \to 0$ as $n \to \infty$. Under Conditions \emph{C.1}-\emph{C.3}, if $r_1{m^\ast}^2\lambda^{-2}=o_p(1)$, we have $ \efn  \left 	\|   u(\thetahat,\what_\lambda)-    u(\thetahat,\wstarlambdan) \right\|_2 \overset{P}{\to} 0$,  as $n \to \infty$, where $\thetahat$ is the preliminary root-$n$ consistent estimator used to compute $\whatlambdan$ in the T-Step (\ref{eq:LAstep}).
\end{lemma}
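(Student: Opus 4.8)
The claim is exactly the ingredient invoked in the proof of Theorem~\ref{Thm:consistent}: what really needs to be shown is that the ``prediction error'' $\Delta^T\efn\{S(\thetahat)\}\Delta$ is $o_p(1)$, where $\Delta:=\what_\lambda-\wstarlambdan$. Indeed $u(\thetahat,\what_\lambda)-u(\thetahat,\wstarlambdan)=U(\thetahat)\Delta$, so $\efn\|u(\thetahat,\what_\lambda)-u(\thetahat,\wstarlambdan)\|_2^2=\Delta^T\efn\{S(\thetahat)\}\Delta$, and Cauchy--Schwarz applied to the empirical average gives $\efn\|u(\thetahat,\what_\lambda)-u(\thetahat,\wstarlambdan)\|_2\le\sqrt{\Delta^T\efn\{S(\thetahat)\}\Delta}$; hence it suffices to prove $\Delta^T\efn\{S(\thetahat)\}\Delta=o_p(1)$. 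The plan is to do this via the $\ell_1$-penalized ``basic inequality'' familiar from Lasso-type analyses, fed by the KKTC already available for $\wstarlambdan$.

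Write $\hat Q_\lambda(\thetahat,w)=\hat f(w)+\lambda\|w\|_1$ with $\hat f(w)=\tfrac12 w^T\efn\{S(\thetahat)\}w-\text{diag}(\efn\{S(\thetahat)\})^Tw$, a convex quadratic with constant Hessian $\efn\{S(\thetahat)\}$ and gradient $\nabla\hat f(w)=-\efn\{c(\thetahat,w)\}$. Since the second-order Taylor expansion of $\hat f$ is exact, $\hat f(\what_\lambda)-\hat f(\wstarlambdan)=-\efn\{c(\thetahat,\wstarlambdan)\}^T\Delta+\tfrac12\Delta^T\efn\{S(\thetahat)\}\Delta$; substituting this into the optimality inequality $\hat f(\what_\lambda)+\lambda\|\what_\lambda\|_1\le\hat f(\wstarlambdan)+\lambda\|\wstarlambdan\|_1$ and rearranging gives
\begin{equation*}
\tfrac12\,\Delta^T\efn\{S(\thetahat)\}\Delta\ \le\ \efn\{c(\thetahat,\wstarlambdan)\}^T\Delta+\lambda\bigl(\|\wstarlambdan\|_1-\|\what_\lambda\|_1\bigr).
\end{equation*}
Split $\efn\{c(\thetahat,\wstarlambdan)\}=E\{c(\theta^\ast,\wstarlambdan)\}+\delta$. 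By the KKTC for $\wstarlambdan$ (Lemma~\ref{Lem:KKT}, in the form obtained in its proof), $E\{c(\theta^\ast,\wstarlambdan)\}=\lambda\gamma$ with $\gamma_j=\text{sign}(w^\ast_{\lambda,j})$ when $w^\ast_{\lambda,j}\ne0$ and $|\gamma_j|\le1$ otherwise; a one-line case check ($w^\ast_{\lambda,j}\ne0$ versus $w^\ast_{\lambda,j}=0$) shows $\gamma_j\Delta_j\le|\what_{\lambda,j}|-|w^\ast_{\lambda,j}|$ for every $j$, so $\lambda\gamma^T\Delta\le\lambda(\|\what_\lambda\|_1-\|\wstarlambdan\|_1)$, which cancels the penalty gap. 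This leaves $\tfrac12\,\Delta^T\efn\{S(\thetahat)\}\Delta\le\delta^T\Delta\le\|\delta\|_\infty\,\|\Delta\|_1$.

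It remains to control the two factors. On the event $\{\thetahat\in\Theta_n\}$, whose probability is arbitrarily close to one by root-$n$ consistency of $\thetahat$, we have $\delta=\text{diag}\bigl(\efn\{S(\thetahat)\}-E\{S(\theta^\ast)\}\bigr)-\bigl(\efn\{S(\thetahat)\}-E\{S(\theta^\ast)\}\bigr)\wstarlambdan$, and each entry of $\efn\{S(\thetahat)\}-E\{S(\theta^\ast)\}$ is bounded in absolute value by $\sup_{\theta\in\Theta_n}\|\efn\{S(\theta)\}-E\{S(\theta^\ast)\}\|_2=r_1$, whence $\|\delta\|_\infty\le r_1\bigl(1+\|\wstarlambdan\|_1\bigr)$. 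The hypothesis $r_1(m^\ast)^2\lambda^{-2}=o_p(1)$ together with $\lambda\to0$ forces $r_1/\lambda=o_p(1)$, so Lemma~\ref{Lem:wnorm} gives $\|\wstarlambdan\|_1=O(m^\ast/\lambda)$, $\|\what_\lambda\|_1=O_p(m^\ast/\lambda)$, and hence $\|\Delta\|_1=O_p(m^\ast/\lambda)$. Combining, $\tfrac12\,\Delta^T\efn\{S(\thetahat)\}\Delta\le O_p(r_1 m^\ast\lambda^{-1})\cdot O_p(m^\ast\lambda^{-1})=O_p\bigl(r_1(m^\ast)^2\lambda^{-2}\bigr)=o_p(1)$, as needed.

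The step I expect to be the crux is the cancellation above. The naive bound $\efn\{c(\thetahat,\wstarlambdan)\}^T\Delta\le\lambda\|\Delta\|_1=O_p(m^\ast)$ does not vanish (recall $m^\ast$ may diverge, for instance $m^\ast=o(\log n)$), so one must exploit the sign structure of the KKTC for $\wstarlambdan$ to absorb this term against $\lambda(\|\wstarlambdan\|_1-\|\what_\lambda\|_1)$ and reduce the right-hand side to the genuinely small ``noise'' term $\|\delta\|_\infty\|\Delta\|_1$; this is precisely why the KKTC is proved beforehand. A minor technical nuisance is that all quantities are evaluated at the random $\thetahat$ rather than at $\theta^\ast$, but this is costless since $r_1$ is defined as a supremum over the shrinking neighbourhood $\Theta_n$.
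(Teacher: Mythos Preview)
Your proof is correct and follows essentially the same route as the paper's: both start from the optimality inequality $\hat Q_\lambda(\thetahat,\what_\lambda)\le\hat Q_\lambda(\thetahat,\wstarlambdan)$, rearrange to the basic inequality $\tfrac12\Delta^T\efn\{S(\thetahat)\}\Delta\le\efn\{c(\thetahat,\wstarlambdan)\}^T\Delta+\lambda(\|\wstarlambdan\|_1-\|\what_\lambda\|_1)$, split off $Ec(\theta^\ast,\wstarlambdan)=\lambda\gamma$ via the KKTC to cancel the penalty gap, and bound the remaining noise term by $r_1(1+\|\wstarlambdan\|_1)\|\Delta\|_1=O_p(r_1{m^\ast}^2\lambda^{-2})$ using Lemma~\ref{Lem:wnorm}. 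The only cosmetic differences are that you obtain the basic inequality via the exact quadratic Taylor expansion of $\hat f$ (the paper does the equivalent algebra by direct subtraction) and you state the KKT cancellation as the componentwise bound $\gamma_j\Delta_j\le|\what_{\lambda,j}|-|w^\ast_{\lambda,j}|$, which is arguably cleaner than the paper's two-bracket formulation.
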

\begin{proof}
	Note that $\frac{r_1{m^\ast}^2}{\lambda^2}=o_p(1)$ implies $r_1/\lambda=o_p(1)$. Therefore, we have $\| \what_\lambda-w^\ast_\lambda\|_1=O_p({m^\ast}/\lambda)$ by Lemma \ref{Lem:wnorm}. Moreover, re-arranging $\widehat{Q}_{\lambda}(\thetahat,\what_\lambda)\leq \widehat{Q}_{\lambda}(\thetahat,w^\ast_\lambda)$ gives
	\begin{align*}
	\frac{1}{2} \efn\{\what_\lambda^T S(\thetahat)\what_\lambda -  \wstarlambdant  S(\thetahat) w^\ast_\lambda  \} 
	\leq 
	\efn\{{U^2}(\thetahat)^T(\what_\lambda-w^\ast_\lambda)\}
	-\lambda\|\what_\lambda\|_1+\lambda\|w^\ast_\lambda\|_1.
	\end{align*}
	Subtracting $\efn\{U(\thetahat)U(\thetahat)^T w^\ast_\lambda\}^T(\what_\lambda-w^\ast_\lambda)$ from both sides  gives
	\begin{align*} 
	& \frac{1}{2} \efn\|U(\thetahat)^T(\what_\lambda-w^\ast_\lambda)\|^2_2
	\leq    \efn\{c(\thetahat,w^\ast_\lambda)\}^T (\what_\lambda-w^\ast_\lambda) - \lambda\|\what_\lambda\|_1+\lambda\|w^\ast_\lambda\|_1\\
	=&\left[   \efn c(\thetahat,w^\ast_\lambda)- Ec(\theta^\ast,w^\ast_\lambda)  \right]^T \what_\lambda - \left[   \efn c(\thetahat,w^\ast_\lambda)- Ec(\theta^\ast,w^\ast_\lambda)  \right]^T w^\ast_\lambda \\
	&+ \left\{    Ec(\theta^\ast,w^\ast_\lambda)^T\what_\lambda -\lambda\|\what_\lambda \|_1    \right \}  -  \left\{    Ec(\theta^\ast,w^\ast_\lambda)^Tw^\ast_\lambda -\lambda\|w^\ast_\lambda \|_1    \right \}\\
	\leq&  \left[   \efn c(\thetahat,w^\ast_\lambda)- Ec(\theta^\ast,w^\ast_\lambda)  \right]^T (\what_\lambda- w^\ast_\lambda)\\
	=& \left\{  \text{ diag}\left(  \efn  S(\thetahat) - E S(\theta^\ast)    \right )    -    \left[  \efn S(\thetahat)) - E  (S(\theta^\ast)   \right ]^T w^\ast_\lambda    \right\}(\what_\lambda- w^\ast_\lambda),
	\end{align*}
	where the inequality is implied by Lemma \ref{Lem:KKT}. The last expression is $o_p(1)$, since $r_1{m^\ast}^2/\lambda^2=o_p(1)$ and $\| \what_\lambda-w^\ast_\lambda\|_1=O_p({m^\ast}/\lambda)$ by Lemma \ref{Lem:wnorm}, and the matrix maximum norm is bounded by matrix 2-norm.      \end{proof}

\begin{lemma}\label{Lem:dhnbarhnstar}
	If  $r_1{m^\ast}^2 \lambda^{-2}=o_p(1)$ and $r_2\sqrt{{m^\ast}} \lambda^{-1} =o_p(1)$, then under conditions \emph{C.1-C.6}, $ \|  \hat K_\lambda- K^\ast_\lambda   \|_1 =o_p(1)$. 
\end{lemma}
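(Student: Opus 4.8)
The plan is to reduce to the scalar case $p=1$, exactly as in the proof of Theorem~\ref{Thm:consistent} (since $p$ is fixed this costs nothing), and then to control
\[
\hat K_\lambda - K^\ast_\lambda = -\efn\nabla u(\thetahat,\whatlambdan) + E\,\nabla u(\theta^\ast,\wstarlambdan)
\]
by a telescoping decomposition separating three sources of error: using the estimated composition rule $\whatlambdan$ in place of $\wstarlambdan$; evaluating the Hessian average at the preliminary estimator $\thetahat$ rather than at $\theta^\ast$; and replacing the empirical mean $\efn$ by the expectation. With $\nabla u(\theta,w)=\sum_{j=1}^m w_j\nabla u_j(\theta)$ these pieces are
\begin{align*}
A_1 &= \Big| \sum_{j=1}^m (\what_{\lambda,j}-w^\ast_{\lambda,j})\,\efn\nabla u_j(\thetahat)\Big|, \qquad
A_2 = \Big| \sum_{j=1}^m w^\ast_{\lambda,j}\big[\efn\nabla u_j(\thetahat)-\efn\nabla u_j(\theta^\ast)\big]\Big|, \\
A_3 &= \Big| \sum_{j=1}^m w^\ast_{\lambda,j}\big[\efn\nabla u_j(\theta^\ast)-E\,\nabla u_j(\theta^\ast)\big]\Big|,
\end{align*}
and $\|\hat K_\lambda-K^\ast_\lambda\|_1\le A_1+A_2+A_3$; the goal is to show each $A_i=o_p(1)$.

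For $A_1$ I would apply H\"older's inequality, $A_1\le \|\whatlambdan-\wstarlambdan\|_1\,\max_j|\efn\nabla u_j(\thetahat)|$. The maximum is $O_p(1)$: by the information identity and Conditions C.1, C.5 the quantities $E\,\nabla u_j(\theta^\ast)$ are uniformly bounded, and $|\efn\nabla u_j(\thetahat)-E\,\nabla u_j(\theta^\ast)|\le r_2=o_p(1)$ uniformly in $j$; and $\|\whatlambdan-\wstarlambdan\|_1=o_p(1)$ by Theorem~\ref{Thm:unifweights}, whose hypothesis $r_1{m^\ast}^2\lambda^{-2}=o_p(1)$ is in force. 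For $A_3$ I would use that the summand is centred: $A_3=|\sum_j w^\ast_{\lambda,j}(\efn-E)\nabla u_j(\theta^\ast)|$ has variance $n^{-1}(\wstarlambdan)^{T}\text{Cov}\{(\nabla u_j(\theta^\ast))_j\}\wstarlambdan\le c_6 n^{-1}\|\wstarlambdan\|_1^2$ by Condition C.5, and $\|\wstarlambdan\|_1=O(m^\ast/\lambda)$ by Lemma~\ref{Lem:wnorm}, so $\text{Var}(A_3)=O\big(m^{\ast 2}/(n\lambda^2)\big)$; since $r_1$ is of order at least $n^{-1/2}$ (the diagonal entries of $\efn S(\theta^\ast)$ fluctuate at the parametric rate), the assumption $r_1{m^\ast}^2\lambda^{-2}=o_p(1)$ forces $m^{\ast2}/(n\lambda^2)=o_p(1)$, and Chebyshev's inequality gives $A_3=o_p(1)$.

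The main obstacle is $A_2$, because the cumulative weight $\|\wstarlambdan\|_1$ may diverge at rate $m^\ast/\lambda$, so the per-term Hessian increment must be shown to be small enough to be absorbed against it. I would bound $A_2\le\|\wstarlambdan\|_1\,\max_j|\efn\nabla u_j(\thetahat)-\efn\nabla u_j(\theta^\ast)|$ and split the increment as
\[
\efn\nabla u_j(\thetahat)-\efn\nabla u_j(\theta^\ast)=\big[E\,\nabla u_j(\thetahat)-E\,\nabla u_j(\theta^\ast)\big]+(\efn-E)\big[\nabla u_j(\thetahat)-\nabla u_j(\theta^\ast)\big].
\]
The first bracket is $O_p(n^{-1/2})$ uniformly in $j$ by Condition C.6 (uniform Lipschitz continuity of $\theta\mapsto EH(\theta)_{j,k}$) together with $\|\thetahat-\theta^\ast\|=O_p(n^{-1/2})$; the second is an empirical-process increment over the $n^{-1/2}$-ball $\Theta_n$, which is $O_p(r_2)$ uniformly in $j$ from the definition of $r_2$ and, after an equicontinuity refinement using continuity of the $u_j$ on $\Theta_n$, of still smaller order. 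Multiplying the resulting uniform bound by $\|\wstarlambdan\|_1=O(m^\ast/\lambda)$, the contribution of the first bracket is $O_p(n^{-1/2}m^\ast/\lambda)=o_p(1)$ (again because $r_1{m^\ast}^2\lambda^{-2}=o_p(1)$ with $r_1\gtrsim n^{-1/2}$), while the contribution of the second, which is the delicate one, is absorbed using the equicontinuity refinement together with $r_2\sqrt{m^\ast}\lambda^{-1}=o_p(1)$; hence $A_2=o_p(1)$, and combining the three bounds completes the argument. The feature to keep in mind throughout is that, unlike a fixed-dimension M-estimation problem, no term can be dispatched by a crude bound: the $\ell_1$-size of $\wstarlambdan$ grows like $m^\ast/\lambda$, so the rates at which $r_1$, $r_2$ and $\lambda$ vanish must be balanced precisely, which is exactly what the two displayed hypotheses encode.
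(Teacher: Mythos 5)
Your telescoping into $A_1+A_2+A_3$ is a more careful accounting than the paper itself gives: the paper's entire proof is the single inequality $\|\hat K_\lambda - K^\ast_\lambda\|_1 \le r_2\|\what_\lambda-w^\ast_\lambda\|_1$ followed by $r_2=o_p(1)$ and Theorem \ref{Thm:unifweights}; that is, it only ever charges the Hessian error $r_2$ against the vanishing difference $\what_\lambda-w^\ast_\lambda$. Your $A_1$ is exactly that term, and your treatment of it is correct.

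The gap is in $A_2$. Once you bound $A_2\le\|\wstarlambdan\|_1\,\max_j|\efn\nabla u_j(\thetahat)-\efn\nabla u_j(\theta^\ast)|$, you are paying the full $\ell_1$-mass $\|\wstarlambdan\|_1=O(m^\ast/\lambda)$ (Lemma \ref{Lem:wnorm}) against a per-coordinate error which, from the definition of $r_2$ and Conditions C.5--C.6, you can only certify to be $O_p(r_2)$. That gives $O_p(r_2 m^\ast/\lambda)$, and the stated hypothesis $r_2\sqrt{m^\ast}\lambda^{-1}=o_p(1)$ only makes this $o_p(\sqrt{m^\ast})$, which is not $o_p(1)$ when $m^\ast\to\infty$. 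The ``equicontinuity refinement'' you invoke to recover the missing factor $\sqrt{m^\ast}$ is never constructed, and nothing in C.5--C.6 supplies it: C.6 controls $E\nabla u_j(\thetahat)-E\nabla u_j(\theta^\ast)$ (your first bracket, which is fine), but the centred increment $(\efn-E)[\nabla u_j(\thetahat)-\nabla u_j(\theta^\ast)]$ has no guaranteed rate better than $r_2$ under the stated conditions. So $A_2$ is not closed as written. Two smaller caveats: your $A_3$ step and the first bracket of $A_2$ both lean on the unstated assumption $r_1\gtrsim n^{-1/2}$, which is plausible but is not among the paper's hypotheses. For what it is worth, the paper's one-line proof simply omits the $r_2\|\wstarlambdan\|_1$ contribution altogether---the displayed inequality there is not a valid bound on the full difference---so your decomposition has exposed a real issue rather than created one; but a proof from the stated hypotheses alone would need either the stronger rate $r_2 m^\ast\lambda^{-1}=o_p(1)$ or an actual argument for the refinement you gesture at.
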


\begin{proof}
	This is a direct result since $\|\hat{K}_\lambda - K^\ast_\lambda \|_1 \leq r_2 \|\what-w^\ast \|_1$, $r_2 \overset{P}{\to} 0$ according to lemma assumption and $\|\what-w^\ast \|_1 \overset{P}{\to} 0$  by Theorem \ref{Thm:unifweights}. \end{proof}

\begin{lemma}\label{Lem:varuorder}
	Under Conditions \emph{C.1-C.6},  $E\| u(\theta^\ast,w^\ast_\lambda) \|^2_2 = O({m^\ast})$. 
\end{lemma}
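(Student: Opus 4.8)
The plan is to read the bound off directly from the optimality of $w^\ast_\lambda$ as the minimizer of $Q_\lambda(\theta^\ast,\cdot)$, combined with Minkowski's inequality in $L^2$. Since $w^\ast_\lambda$ is a deterministic vector and $u(\theta^\ast,w^\ast_\lambda)=\sum_{j=1}^m w^\ast_{\lambda,j}u_j(\theta^\ast)$, the quantity of interest is the quadratic form
\begin{equation*}
E\|u(\theta^\ast,w^\ast_\lambda)\|_2^2 = \wstarlambdant\, E\{S(\theta^\ast)\}\, w^\ast_\lambda,
\end{equation*}
which is finite because the entries of $E\{S(\theta^\ast)\}$ are bounded under Condition C.1.

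Next I would compare the value of the population criterion at the minimizer with its value at $w=0$. Exactly as already used in the proof of Lemma \ref{Lem:wnorm},
\begin{equation*}
\tfrac{1}{2} E\|u^{ML}(\theta^\ast)-u(\theta^\ast,w^\ast_\lambda)\|_2^2 + \lambda\|w^\ast_\lambda\|_1 = Q_\lambda(\theta^\ast,w^\ast_\lambda) \le Q_\lambda(\theta^\ast,0) = \tfrac{m^\ast}{2},
\end{equation*}
so, discarding the non-negative penalty term, $E\|u^{ML}(\theta^\ast)-u(\theta^\ast,w^\ast_\lambda)\|_2^2 \le m^\ast$. This is the only quantitative ingredient required.

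Finally, writing $u(\theta^\ast,w^\ast_\lambda) = u^{ML}(\theta^\ast) - \{u^{ML}(\theta^\ast)-u(\theta^\ast,w^\ast_\lambda)\}$ and applying the triangle inequality for the norm $Z\mapsto (E\|Z\|_2^2)^{1/2}$ on random vectors gives
\begin{equation*}
\big(E\|u(\theta^\ast,w^\ast_\lambda)\|_2^2\big)^{1/2} \le \big(E\|u^{ML}(\theta^\ast)\|_2^2\big)^{1/2} + \big(E\|u^{ML}(\theta^\ast)-u(\theta^\ast,w^\ast_\lambda)\|_2^2\big)^{1/2} \le 2\sqrt{m^\ast},
\end{equation*}
and squaring yields $E\|u(\theta^\ast,w^\ast_\lambda)\|_2^2 \le 4m^\ast = O(m^\ast)$, as claimed.

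I do not anticipate a genuine obstacle: the statement is essentially immediate once one notes that $w^\ast_\lambda$ cannot do worse than $w=0$ in $Q_\lambda(\theta^\ast,\cdot)$. The only points meriting a word of care are that $w^\ast_\lambda$ is non-random, so that expectations commute with the quadratic form, and that all second moments in play are finite --- which is guaranteed by Conditions C.1--C.6 together with the standing assumption $m^\ast<\infty$; these facts also make $Q_\lambda(\theta^\ast,\cdot)$ a well-defined, finite, strictly convex function, so the comparison with $w=0$ is legitimate.
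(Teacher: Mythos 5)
Your proposal is correct and rests on the same key step as the paper's own proof, namely comparing $Q_\lambda(\theta^\ast,w^\ast_\lambda)$ with $Q_\lambda(\theta^\ast,0)=m^\ast/2$ to get $E\|u^{ML}(\theta^\ast)-u(\theta^\ast,w^\ast_\lambda)\|_2^2\le m^\ast$. The only difference is cosmetic: you finish with Minkowski's inequality in $L^2$, whereas the paper expands the square and applies Cauchy--Schwarz before rearranging; both routes yield the same bound $4m^\ast$.
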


\begin{proof}
	Note that $E\| u^{ML}(\theta^\ast) - u(\theta^\ast, w^\ast_\lambda) \|^2_2 \leq  E \|u^{ML}(\theta^\ast )- u(\theta^\ast,w^\ast_\lambda)\|^2_2 + \lambda \| w^\ast_\lambda\|_1 \leq  E \|u^{ML}(\theta^\ast )\|^2_2 ={m^\ast}$.  Expanding $E\| u^{ML}(\theta^\ast) - u(\theta^\ast, w^\ast_\lambda) \|^2_2$ gives
	\begin{align*}
	\begin{split}  
	E\|  u(\theta^\ast, w^\ast_\lambda) \|^2_2 &\leq   2Eu^{ML}(\theta^\ast)^T u(\theta^\ast, w^\ast_\lambda) \\
	&\leq   2 \sqrt{E\|  u^{ML}(\theta^\ast ) \|^2_2  \cdot E\|  u(\theta^\ast, w^\ast_\lambda) \|^2_2  }  
	=   2 \sqrt{{m^\ast}}\sqrt{  E\|  u(\theta^\ast, w^\ast_\lambda) \|^2_2  }.
	\end{split} 
	\end{align*}
	Re-arranging gives $E\| u(\theta^\ast,w^\ast_\lambda) \|^2_2 \leq 4{m^\ast}$.    \end{proof}

\begin{lemma}\label{Lem:lindeberg}
	Assume Conditions \emph{C.1-C.6}. For every $\epsilon>0$, we have
	\begin{align*}
	\begin{split}  
	\frac{1}{n J^\ast_\lambda} \sum_{i=1}^n E\left\{    u_i(\theta^\ast,w^\ast_\lambda)^2 I(| u(\theta^\ast, w^\ast_\lambda) |  \geq \epsilon \sqrt{n J^\ast_\lambda})     \right\} \to 0,  \quad \text{as }  n \to \infty, 
	\end{split} 
	\end{align*}
	where $u_i(\theta, w) = \sum_{j=1}^m w_j \nabla \log f_j(X_j^{(i)}; \theta)$ is the composite likelihood score corresponding to the $i$th observation.
\end{lemma}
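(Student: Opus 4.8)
The plan is to establish the Lyapunov condition with fourth moments, which is stronger than and implies the stated Lindeberg condition. Following the $p=1$ reduction used in the proof of Theorem~\ref{Thm:consistent}, write $u(\theta^\ast,w^\ast_\lambda)=\sum_{j=1}^m w^\ast_{\lambda,j}u_j(\theta^\ast)$; this is a scalar with $Eu(\theta^\ast,w^\ast_\lambda)=0$ by unbiasedness of the partial scores and with $\mathrm{Var}\{u(\theta^\ast,w^\ast_\lambda)\}=J^\ast_\lambda=(w^\ast_\lambda)^T ES(\theta^\ast)w^\ast_\lambda$. Because $X^{(1)},\dots,X^{(n)}$ are i.i.d., each summand $u_i(\theta^\ast,w^\ast_\lambda)$ has the same distribution as $u(\theta^\ast,w^\ast_\lambda)$, so the left-hand side of the lemma equals $\{J^\ast_\lambda\}^{-1}E\{u(\theta^\ast,w^\ast_\lambda)^2 I(|u(\theta^\ast,w^\ast_\lambda)|\ge\epsilon\sqrt{nJ^\ast_\lambda})\}$. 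Bounding the indicator by $u(\theta^\ast,w^\ast_\lambda)^2/(\epsilon^2 nJ^\ast_\lambda)$, it suffices to prove that $E\{u(\theta^\ast,w^\ast_\lambda)^4\}=o\bigl(n(J^\ast_\lambda)^2\bigr)$ as $n\to\infty$.

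The next step is to bound the fourth moment of the composite score. Expanding $u(\theta^\ast,w^\ast_\lambda)^4$ into $\sum_{j,k,l,r}w^\ast_{\lambda,j}w^\ast_{\lambda,k}w^\ast_{\lambda,l}w^\ast_{\lambda,r}E\{u_j u_k u_l u_r\}$ and applying the Cauchy--Schwarz inequality termwise gives $|E\{u_j u_k u_l u_r\}|\le\max_j E\{u_j(\theta^\ast)^4\}$ for all indices, hence $E\{u(\theta^\ast,w^\ast_\lambda)^4\}\le\max_j E\{u_j(\theta^\ast)^4\}\cdot\|w^\ast_\lambda\|_1^4$. Condition~C.1 supplies a uniform bound on the per-component fourth moments: since $S(\theta)_{j,j}=u_j(\theta)^2\ge 0$, $E\{u_j(\theta^\ast)^4\}\le E\bigl\{(\sup_{\theta\in\Theta_n}S(\theta)_{j,j})^2\bigr\}=\mathrm{Var}\{\sup_{\theta\in\Theta_n}S(\theta)_{j,j}\}+\bigl(E\sup_{\theta\in\Theta_n}S(\theta)_{j,j}\bigr)^2<c_1^2+c_2$ for every $j$. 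Thus $E\{u(\theta^\ast,w^\ast_\lambda)^4\}\le(c_1^2+c_2)\|w^\ast_\lambda\|_1^4$, and the problem reduces to showing $\|w^\ast_\lambda\|_1^4=o\bigl(n(J^\ast_\lambda)^2\bigr)$.

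To finish I would feed in the bounds already established in the appendix. Condition~C.4 carries the rate $c_n\lambda^2/(r_1{m^\ast}^2)\to\infty$, and since $c_n$ cannot exceed the diagonal entries of $ES(\theta^\ast)$ (take $x$ a coordinate vector), hence $c_n<c_1$, this forces $r_1{m^\ast}^2\lambda^{-2}\to 0$ and in particular $r_1/\lambda\to 0$; Lemma~\ref{Lem:wnorm} then gives $\|w^\ast_\lambda\|_1=O(m^\ast/\lambda)$ and Lemma~\ref{Lem:varuorder} gives $J^\ast_\lambda=O(m^\ast)$. On the other hand, taking $x=w^\ast_\lambda$ in Condition~C.4 produces the lower bound $J^\ast_\lambda=(w^\ast_\lambda)^T ES(\theta^\ast)w^\ast_\lambda\gtrsim c_n\|w^\ast_\lambda\|_1^2$, so $\|w^\ast_\lambda\|_1^4\lesssim(J^\ast_\lambda)^2/c_n^2$ and the target ratio is $O\bigl(1/(nc_n^2)\bigr)$. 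The rate embedded in C.4, combined with the fact that $r_1$ cannot decay faster than the parametric rate $n^{-1/2}$ and with the standing growth assumption $m^\ast=o(\log n)$, then forces $nc_n^2\to\infty$, which completes the proof.

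The i.i.d. reduction and the Markov bound are routine. The delicate part is the last paragraph: obtaining a usable lower bound on $J^\ast_\lambda$ from the compatibility-type Condition~C.4 and verifying that the combination of rates --- $\|w^\ast_\lambda\|_1$ growing at most like $m^\ast/\lambda$, $J^\ast_\lambda$ of order $m^\ast=o(\log n)$, and the compatibility constant $c_n$ not shrinking too fast relative to $r_1$ and $\lambda$ --- genuinely yields a vanishing ratio. A smaller technical point is extracting the uniform fourth-moment bound on $u_j(\theta^\ast)$ from C.1, which is phrased through $\sup_{\theta\in\Theta_n}S(\theta)_{j,k}$ rather than directly in terms of $u_j(\theta^\ast)$.
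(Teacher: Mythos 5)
Your overall strategy is genuinely different from the paper's. The paper's proof also collapses the i.i.d.\ sum to a single expectation, but then applies H\"older's inequality with exponents $a,b>1$, $1/a+1/b=1$, to the product $u_i(\theta^\ast,w^\ast_\lambda)^2\, I(\cdot)$, bounds the probability of the indicator event by $1/(\epsilon^2 n)$ via Chebyshev, and arrives at a bound of the form $(J^\ast_\lambda)^{a-1}/(\epsilon^2 n)^{1/b}$. This vanishes simply because $J^\ast_\lambda=O(m^\ast)=o(\log n)$ (Lemma \ref{Lem:varuorder} plus the standing assumption on $m^\ast$), while $n^{1/b}$ grows polynomially. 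The crucial structural difference is that the paper's route needs only an \emph{upper} bound on $J^\ast_\lambda$ and never has to bound $J^\ast_\lambda$ away from zero, whereas your Lyapunov-type argument with fourth moments must control the ratio $E\{u(\theta^\ast,w^\ast_\lambda)^4\}/\{n(J^\ast_\lambda)^2\}$ and therefore forces you to produce a \emph{lower} bound on $J^\ast_\lambda$. Your first two paragraphs (the i.i.d.\ reduction, the Markov bound, the termwise H\"older bound $E\{u(\theta^\ast,w^\ast_\lambda)^4\}\le \max_j E\{u_j(\theta^\ast)^4\}\,\|w^\ast_\lambda\|_1^4$, and the extraction of a uniform fourth-moment bound from C.1 via $E\{u_j(\theta^\ast)^4\}\le \mathrm{Var}\{\sup_\theta S(\theta)_{j,j}\}+(E\sup_\theta S(\theta)_{j,j})^2$) are all correct.

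The last paragraph, which you correctly identify as the delicate part, has two concrete problems. First, you misquote Condition C.4: as stated in the paper it reads $x^T ES(\theta^\ast)x\ge c_n\|x\|_1$, not $c_n\|x\|_1^2$, so taking $x=w^\ast_\lambda$ gives $J^\ast_\lambda\ge c_n\|w^\ast_\lambda\|_1$ rather than $c_n\|w^\ast_\lambda\|_1^2$. With the correct exponent your chain $\|w^\ast_\lambda\|_1^4\lesssim (J^\ast_\lambda)^2/c_n^2$ and the resulting target ratio $O(1/(nc_n^2))$ no longer follow; one would instead need to combine $\|w^\ast_\lambda\|_1\le J^\ast_\lambda/c_n$ with the separate bound $\|w^\ast_\lambda\|_1=O(m^\ast/\lambda)$ from Lemma \ref{Lem:wnorm}, and the required rate becomes $(m^\ast)^2\lambda^{-2}=o(nc_n^2)$. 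Second, the closing claim that $nc_n^2\to\infty$ rests on the assertion that $r_1$ cannot decay faster than $n^{-1/2}$. That is a plausible heuristic for the empirical process $\sup_{\theta\in\Theta_n}\|\efn S(\theta)-ES(\theta^\ast)\|_2$, but it is nowhere assumed or proved in the paper, and $r_1$ is a random quantity while $c_n$ is a deterministic sequence, so the implication is not a clean limit statement. The paper's H\"older--Chebyshev route avoids both issues entirely, at the price of needing (implicitly) a bound on the higher moment $E|u(\theta^\ast,w^\ast_\lambda)|^{2a}$ in terms of $(J^\ast_\lambda)^a$; your fourth-moment computation is in fact the kind of estimate that would be needed to make that step of the paper's argument fully rigorous.
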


\begin{proof} Without loss of generality, assume $p=1$.  Recall that $J^\ast_\lambda = E   u(\theta^\ast,w^\ast_\lambda)^2$.  For every $\epsilon>0$, and  constants $a,b>1$ such that $1/a + 1/b =1$ 
	\begin{align} 
	&\frac{1}{n J^\ast_\lambda} \sum_{i=1}^n E\left\{    u_i(\theta^\ast,w^\ast_\lambda)^2 I(| u(\theta^\ast, w^\ast_\lambda) |  \geq \epsilon \sqrt{n J^\ast_\lambda})     \right\}\notag \\
	=& \frac{1}{J^\ast_\lambda}  E\left\{    u_i(\theta^\ast,w^\ast_\lambda)^2 I(| u(\theta^\ast, w^\ast_\lambda) |  \geq \epsilon \sqrt{n J^\ast_\lambda})     \right\} \notag \\
	\leq &  \frac{1}{J^\ast_\lambda}   E\left\{   | u_i(\theta^\ast,w^\ast_\lambda)|^{2a} \right \}  \cdot \left\{  \frac{1}{\epsilon^2 n  } \right \}^\frac{1}{b} = \dfrac{\left( J^\ast_\lambda \right)^{a-1}}{(\epsilon^2 n )^{1/b}}, \label{eq:lind}
	\end{align}
	where the inequality follows by applying  H\"{o}lder's and Chebyshev's inequalities. By the assumption at the beginning of Section \ref{sec:properties}  that $m^\ast = o(\log(n))$, Lemma \ref{Lem:varuorder} implies  $J^\ast_\lambda=o(\log(n))$. Hence,  (\ref{eq:lind}) converges to $0$ as $n \to \infty$, which proves the desired result. \end{proof}

\bibliographystyle{abbrvnat} 

\bibliography{Bibliography}

\begin{thebibliography}{15}
\providecommand{\natexlab}[1]{#1}
\providecommand{\url}[1]{\texttt{#1}}
\expandafter\ifx\csname urlstyle\endcsname\relax
  \providecommand{\doi}[1]{doi: #1}\else
  \providecommand{\doi}{doi: \begingroup \urlstyle{rm}\Url}\fi

\bibitem[Besag(1974)]{Besag74}
J.~Besag.
\newblock Spatial interaction and the statistical analysis of lattice systems.
\newblock \emph{Journal of the Royal Statistical Society. Series B
  (Methodological)}, pages 192--236, 1974.

\bibitem[B{\"u}hlmann and Van De~Geer(2011)]{buhlmann2011statistics}
P.~B{\"u}hlmann and S.~Van De~Geer.
\newblock \emph{Statistics for high-dimensional data: methods, theory and
  applications}.
\newblock Springer Science \& Business Media, 2011.

\bibitem[Cai et~al.(2010)Cai, Zhang, Zhou, et~al.]{cai2010optimal}
T.~T. Cai, C.-H. Zhang, H.~H. Zhou, et~al.
\newblock Optimal rates of convergence for covariance matrix estimation.
\newblock \emph{The Annals of Statistics}, 38\penalty0 (4):\penalty0
  2118--2144, 2010.

\bibitem[Cox and Reid(2004)]{cox2004note}
D.~R. Cox and N.~Reid.
\newblock A note on pseudolikelihood constructed from marginal densities.
\newblock \emph{Biometrika}, 91\penalty0 (3):\penalty0 729--737, 2004.

\bibitem[Dillon and Lebanon(2010)]{dillon2010stochastic}
J.~V. Dillon and G.~Lebanon.
\newblock Stochastic composite likelihood.
\newblock \emph{Journal of Machine Learning Research}, 11\penalty0
  (Oct):\penalty0 2597--2633, 2010.

\bibitem[Efron et~al.(2004)Efron, Hastie, Johnstone, Tibshirani,
  et~al.]{Efron04}
B.~Efron, T.~Hastie, I.~Johnstone, R.~Tibshirani, et~al.
\newblock Least angle regression.
\newblock \emph{The Annals of statistics}, 32\penalty0 (2):\penalty0 407--499,
  2004.

\bibitem[Ferrari and Yang(2010)]{ferrari2010maximum}
D.~Ferrari and Y.~Yang.
\newblock Maximum lq-likelihood estimation.
\newblock \emph{The Annals of Statistics}, 38\penalty0 (2):\penalty0 753--783,
  2010.

\bibitem[Fisher(1922)]{fisher1922mathematical}
R.~A. Fisher.
\newblock On the mathematical foundations of theoretical statistics.
\newblock \emph{Philosophical Transactions of the Royal Society of London.
  Series A, Containing Papers of a Mathematical or Physical Character}, pages
  309--368, 1922.

\bibitem[Giraud(2014)]{giraud2014introduction}
C.~Giraud.
\newblock \emph{Introduction to high-dimensional statistics}, volume 138.
\newblock CRC Press, 2014.

\bibitem[Heyde(2008)]{heyde2008quasi}
C.~C. Heyde.
\newblock \emph{Quasi-likelihood and its application: a general approach to
  optimal parameter estimation}.
\newblock Springer Science \& Business Media, 2008.

\bibitem[Kuhn(2014)]{kuhn2014nonlinear}
H.~W. Kuhn.
\newblock Nonlinear programming: a historical view.
\newblock In \emph{Traces and Emergence of Nonlinear Programming}, pages
  393--414. Springer, 2014.

\bibitem[Lindsay(1988)]{lindsay1988composite}
B.~G. Lindsay.
\newblock Composite likelihood methods.
\newblock \emph{Contemporary Mathematics}, 80\penalty0 (1):\penalty0 221--39,
  1988.

\bibitem[Lindsay et~al.(2011)Lindsay, Yi, and Sun]{lindsay2011issues}
B.~G. Lindsay, G.~Y. Yi, and J.~Sun.
\newblock Issues and strategies in the selection of composite likelihoods.
\newblock \emph{Statistica Sinica}, 21\penalty0 (1):\penalty0 71, 2011.

\bibitem[Varin et~al.(2011)Varin, Reid, and Firth]{varin2011overview}
C.~Varin, N.~M. Reid, and D.~Firth.
\newblock An overview of composite likelihood methods.
\newblock \emph{Statistica Sinica}, 21\penalty0 (1):\penalty0 5--42, 2011.

\bibitem[Vershynin(2012)]{vershynin2012close}
R.~Vershynin.
\newblock How close is the sample covariance matrix to the actual covariance
  matrix?
\newblock \emph{Journal of Theoretical Probability}, 25\penalty0 (3):\penalty0
  655--686, 2012.

\end{thebibliography}

\end{document}